\pgfplotsset{width=10cm,compat=1.9}
\numberwithin{equation}{section}
\newtheorem{thm}{Theorem}[section]
\newtheorem{lma}[thm]{Lemma}
\newtheorem{cor}[thm]{Corollary}
\newtheorem{defn}[thm]{Definition}
\newtheorem{prop}[thm]{Proposition}
\newtheorem{ex}[thm]{Example}
\renewcommand{\epsilon}{\varepsilon}
\newcommand{\eps}{\varepsilon}
\newcommand{\dn}{\mathbb{D}^n}
\newcommand{\rn}{\mathbb{R}^n}
\newcommand{\radlim}{L_{rad}}
\newcommand{\geqs}{\gtrsim}
\newcommand{\bb}{\mathbb}
\newcommand{\cur}{\mathcal}
\newcommand{\steq}{\subseteq}
\newcommand{\LG}{L(\Gamma)}
\newcommand{\bdry}{S^{n-1}}
\renewcommand{\geq}{\geqslant}
\renewcommand{\leq}{\leqslant}
\renewcommand{\emptyset}{\varnothing}
\renewcommand{\epsilon}{\varepsilon}
\renewcommand{\geq}{\geqslant}
\renewcommand{\leq}{\leqslant}
\newcommand{\ubd}{\overline{\dim}_{\textup{B}}}
\newcommand{\lbd}{\underline{\dim}_{\textup{B}}}
\newcommand{\hd}{\dim_{\textup{H}}}
\newcommand{\bd}{\dim_{\textup{B}}}
\def\@setauthors{%
  \begingroup
  \def\thanks{\protect\thanks@warning}%
  \trivlist
  \centering\footnotesize \@topsep30\p@\relax
  \advance\@topsep by -\baselineskip
  \item\relax
  \author@andify\authors
  \def\\{\protect\linebreak}

  \normalsize\lowercase{\authors}%
  
	\ifx\@empty\contribs
  \else
    ,\penalty-3 \space \@setcontribs
    \@closetoccontribs
  \fi
  \endtrivlist
  \endgroup
}
\def\@settitle{\begin{center}
\LARGE\lowercase{\@title}
  \end{center}%
}
\newcommand{\authoremail}[1]{\email{\href{mailto:#1}{\color{lightblue}{#1}}}}
\newcommand{\authoraddress}[1]{\address{\normalfont{#1}}}
\definecolor{lightblue}{HTML}{2B77A4}
\definecolor{darkred}{HTML}{9E0D0D}
\title{Critical exponents and dimension for generalised limit sets}
\author{Tianyi Feng}
\author{Jonathan M. Fraser}
\thanks{JMF was financially supported by  a  \emph{Leverhulme Trust Research Project Grant} (RPG-2019-034) and an \emph{EPSRC Standard Grant} (EP/Y029550/1).}
\date{}
\begin{document}


\maketitle
\thispagestyle{empty}

\begin{abstract}
There is a beautiful and well-studied relationship between the Poincar\'e exponent and the fractal dimensions of the limit set of a Kleinian group.  Motivated by this, given an arbitrary discrete subset of the unit ball we define  a critical exponent   and investigate how it relates to the fractal dimensions of the associated generalised limit set.
\\ \\ 
\emph{Mathematics Subject Classification}: primary: 28A80, 30F40; secondary: 28A78, 37F32.
\\
\emph{Key words and phrases}:   limit set, critical exponent, Hausdorff dimension, box dimension, Kleinian group, Poincar\'e exponent.
\end{abstract}

\tableofcontents

\section{Introduction}

\subsection{Dimension theory of Kleinian limit sets}

Let $\dn = \{z \in \mathbb{R}^n : |z|<1\}$ denote the open unit ball in $\rn$.   Temporarily equip $\dn$ with the \emph{hyperbolic metric} $d_H$ defined by
\[
|ds| = \frac{2|dz|}{1-|z|^2}.
\]
The space $(\dn, d_H)$ is commonly referred to as the \emph{Poincar\'e ball model} of hyperbolic space; see \cite{beardon,maskit}. Write  $\text{Con}^+(n)$ for the group of orientation preserving isometries of $(\dn, d_H)$.  This group can be characterised as the orientation preserving M\"obius transformations on $\rn \cup \{\infty\}$ which setwise stablise $\dn$.  A subgroup $\Gamma \leq \text{Con}^+(n)$ is \emph{Kleinian} if it discrete and the  \emph{limit set} of a Kleinian group $\Gamma$ is  
\[
L(\Gamma) = \overline{\Gamma(0)} \setminus \Gamma(0)
\]
 where the closure is the Euclidean closure. The related \emph{radial limit set} is defined by
\[
L_{rad}(E) = \{ z \in S^{n-1}: \exists c\geq 1 \text{ s.t. } \forall r>0, \,  \exists x \in \Gamma(0)  \text{ s.t. } |z-x| \leq c(1-|x|) \leq cr\},
\]
that is, the radial limit set consists of points in the limit set which can be approached from inside a fixed cone.    Kleinian limit sets are often  beautiful fractal objects and a key question is to determine their fractal dimensions.  Central to this theory is the \emph{Poincar\'e exponent} which  provides a coarse  rate of accumulation of the orbit $\Gamma(0)$ to the limit set.  First define the  \emph{Poincar\'e series} $P_\Gamma: [0, \infty) \mapsto [0,\infty]$ by
\[
P_\Gamma(s) = \sum_{g \in \Gamma } \exp(-sd_H(0,g(0))) = \sum_{g \in \Gamma} \left(\frac{1-|g(0)|}{1+|g(0)|} \right)^s
\]
and then the \emph{Poincar\'e exponent} by
\[
\delta'(\Gamma) = \inf\{ s \geq 0 : P_\Gamma(s) <\infty\}.
\]
If $\Gamma$ is non-elementary and geometrically finite (see \cite{bowditch} for the definitions), then seminal work of Patterson \cite{patterson}, Sullivan \cite{sullivan}, Stratmann--Urba\'nski \cite{su} and Bishop--Jones \cite{bishopjones} established that the box and Hausdorff dimensions of $L(\Gamma)$ are given by $\delta'(\Gamma)$. Moreover, a further deep result of Bishop--Jones  \cite{bishopjones} proved that the Hausdorff dimension of the radial limit set is always  $\delta'(\Gamma)$ in the non-elementary case (that is, without assuming geometric finiteness). See the survey \cite{stratmann} for more background and details.

\subsection{Box and Hausdorff dimension and some notation}

We will be concerned with the Hausdorff and box dimensions of $L(E)$ and we briefly  recall the definitions; see \cite{falconer} for more details. Throughout this section we fix   a non-empty, bounded set $F \subseteq \rn$. For convenience we define the Hausdorff and box dimensions of the empty set to be equal to zero. 

We first define the box dimension. We say $\{P_i\}_i$ is a \emph{$\delta$-packing} of $F$ if the sets $P_i$ are pairwise disjoint and  each $P_i$ is a closed ball of radius $\delta >0$ with its centre in $F$.  Denote the largest number of sets in a $\delta$-packing of $F$ by $N_\delta(F)$. The \emph{lower box dimension} and the \emph{upper box dimension} of $F$ are given by
\begin{align*}
\lbd F &= \liminf_{\delta \to 0} \frac{\log N_\delta(F)}{-\log \delta} \\
\ubd F &= \limsup_{\delta \to 0} \frac{\log N_\delta(F)}{-\log \delta}
\end{align*}
respectively. If these agree then we refer to the common value as the \emph{box dimension} of $F$, written $\bd F$. 

Next we define the Hausdorff dimension.  Given  $\delta>0$, we define the  \emph{$\delta$-approximate $s$-dimensional Hausdorff measure} of $F$ by
\[
\cur{H}^s_\delta(F) = \inf \left\{ \sum_{i=1}^\infty |U_i|^s: \text{$\{U_i\}_i$ is a $\delta$-cover of } F \right\}
\]
and  the \emph{$s$-dimensional Hausdorff measure of $F$} by 
\[
\cur{H}^s(F) = \lim_{\delta \to 0} \cur{H}^s_\delta(F).
\]
Using separability of $\rn$, we may assume all $\delta$-covers are countable and if $F$ is compact we can even assume they are finite. Finally, the \emph{Hausdorff dimension} of $F$ is given by
\[
\hd F = \inf\{s\geq 0: \cur{H}^s(F)=0\} = \sup\{s\geq 0: \cur{H}^s(F)=\infty\}.
\]
It is straightforward to show that
\[
0 \leq \hd F \leq \lbd F \leq \ubd F \leq n
\]
always holds, although if $F$ satisfies some additional homogeneity or regularity conditions, then the box and Hausdorff dimensions may agree.

Throughout the  paper, we write $A \lesssim B$ to mean there exists a constant $c >0$ such that $A \leq cB$.  The implicit constants $c$ are suppressed to improve exposition.  If we wish to emphasise that these constants depend on another parameter $\lambda$, then we will write $A \lesssim_\lambda B$.  We also write $A \gtrsim B$ if $B \lesssim A$ and $A \approx B$ if $A \lesssim B$ and $A \gtrsim B$.

We assume throughout that balls are closed unless stated otherwise.  We will mostly work with Euclidean balls, but sometimes we will use hyperbolic balls.  The distinction will be made clear at the time.  Throughout the paper,  $\dn = \{z \in \mathbb{R}^n : |z|<1\}$ is the open unit ball in $\rn$ and $S^{n-1} = \{z \in \mathbb{R}^n : |z|=1\}$ is the $(n-1)$-sphere which is the boundary of $\dn$.

\section{Generalised  limit sets }
 
\subsection{Generalised  limit sets and the critical exponent}

The starting point for this paper is the observation that the limit set, the Poincar\'e series, and the Poincar\'e exponent of a Kleinian group $\Gamma$  only depend on the orbit $\Gamma(0)$, which is, for all intents and purposes, simply a discrete subset of $\dn$.  Our motivating question is to what extent can we recover the dimension theory of limit sets defined purely in terms of arbitrary discrete subsets of $\dn$?

Recall that a set $E \subseteq \rn$ is \emph{discrete} if for all $x \in E$, there exists $r>0$ such that $B(x,r) \cap E = \{x\}$.

\begin{defn} \label{limDef}
Let $E \subseteq \dn$ be  discrete. Then  the \emph{limit set} of $E$ is defined to be
\[
L(E) = \overline{E} \setminus E
\]
where $\overline{E}$ denotes the Euclidean closure of $E$.
\end{defn}

We also define the radial limit set.

\begin{defn} \label{radlimDef}
For a discrete subset $E \subseteq \dn$ and $c \geq 1$, we define the set of \emph{$c$-radial limit points} $L_c(E)$ of $E$ by
\[
L_c(E) = \{z \in S^{n-1}: \forall r>0, \ \exists x \in E   \text{ s.t. } |x-z| \leq c(1-|x|) \leq cr\}
\]
and the  \emph{radial limit set} $L_{rad}(E)$ of $E$ by
\[
L_{rad}(E) = \bigcup_{c \geq 1} L_c(E).
\]    
\end{defn}

It is straightforward to show that $L(E)$ is closed and contained in $\dn \cup S^{n-1}$.  In many cases of interest, $L(E)$ is a fractal set and our main aim is to derive formulae for (or at least estimate) the fractal dimensions of $L(E)$.  Motivated by the Poincar\'e exponent from the theory of Kleinian groups, we associate a critical exponent to the discrete set $E$ which gives a coarse description of how fast it approaches the boundary.

\begin{defn}
Let $E \subseteq \dn$ be discrete. The  associated \emph{accumulation series} $S_E: [0,\infty) \mapsto [0,\infty]$ is given by
\[
S_E(s) = \sum_{x \in E} (1-|x|)^s
\]
where $1-|x|$ is the distance of $x$ to the boundary $S^{n-1}$. 
\end{defn}

Since $1-|x|<1$ for all $x \in E$,   $S_E(s)$ is non-increasing and so has a well-defined critical exponent.  

\begin{defn}
For $E \steq \dn$   discrete, we define the \emph{critical exponent $\delta(E)$} to be the smallest $s$ such that $S_E(s)$ converges, i.e.
\[
\delta(E) = \inf\{ s \geq 0 : S_E(s) <\infty\} = \sup \{ s \geq 0 : S_E(s) =\infty\}
\]
where $\inf \emptyset = \infty$ and $\sup \emptyset = 0$ by convention.
\end{defn}

\subsection{First observations and simple examples}

A naive guess, inspired by the theory of Kleinian limit sets, could be that the dimensions of $L(E)$ are given by the critical exponent $\delta(E)$.  This is obviously completely false in this level of generality.  Moreover, this naive guess can fail as dramatically as possible in either direction.

\begin{ex} \label{example1}
 For each $k \in \mathbb{N}$ let $E_k \subseteq \dn$ consist of $k$ maximally separated  points all of distance $2^{-k}$ from the boundary  $S^{n-1}$ and let $E = \cup_k E_k$.  Then clearly $E$ is discrete, $L(E) =  S^{n-1}$, and so $\hd L(E) = \bd L(E) = n-1$ is as large as possible.  However,  
\[
S_E(s)  = \sum_k k 2^{-ks} <\infty
\]
for all $s>0$ and therefore $\delta(E) = 0$.
\end{ex}

\begin{ex}\label{example2}
Let $E  \subseteq \dn$ consist of a sequence of points $x_k$ converging along a common line to a single point $w \in S^{n-1}$ and suppose $x_k$ is a distance $1/\log k$ from the boundary.  Then clearly $E$ is discrete, $L(E) =  \{w\}$, and so $\hd L(E) = \bd L(E) = 0$ is as small as possible.  However,  
\[
S_E(s)  = \sum_k  (\log k)^{-s} =\infty
\]
for all $s\geq0$ and therefore $\delta(E) = \infty$.
\end{ex}

Another problematic  situation is when the limit set is not a subset of the boundary.  

\begin{ex}\label{example3}
Let $E \steq \dn$ be discrete. If $L(E) \cap \dn \neq \emptyset$, then $\delta(E)=\infty$.    To see this, let $w \in L(E) \cap \dn$.  Then there must be infinitely many distinct elements of $E$ whose distance to the boundary is at least $(1-|w|)/2$.  Therefore $S_E(s)  = \infty$ for all $s\geq0$ and  $\delta(E) = \infty$.
 \end{ex}

With these extreme examples in mind, it is clear that we need to impose some additional structure on $E$ in order to say anything sensible.  Our aim is therefore to make as mild assumptions as possible, and still be able to say something.  \\

\begin{defn} \label{sepWAdef}
Let $E \steq \dn$ be discrete.     

\begin{enumerate}
\item[(i)] We say that $E$ is \emph{separated} if there exists a constant $c_1 \in (0,1)$ such that for all $x \in E$,
\[
B(x, c_1(1-|x|)) \cap E = \{x\}.
\]

\item[(ii)] We say that $E$ is \emph{well-approximated} if there exists some constant $c_2 \geq 1$ such that for all $x \in E$, there is an element $z \in L(E)$ satisfying 
\[
|x-z| \leq c_2(1-|x|).
\]
\end{enumerate}
\end{defn}

It is easy to see that the example described in Example \ref{example1} is not well-approximated (but is separated) and the example presented in Example \ref{example2} is not separated (but is well-approximated).  The examples discussed in Example \ref{example3} are not separated.  That is, if $E \subseteq \dn$ is separated, then $L(E) \cap \dn = \emptyset$. 

We saw in Example \ref{example2} that $\delta(E) = \infty$ is possible, but it is straightforward to see that if $E$ is separated, then $\delta(E) \leq n-1$.  Indeed, partitioning $E$ into  elements $x$ for which $(1-|x|) \approx 2^{-k}$, separated $E$ must satisfy
\[
S_E(s)  \lesssim \sum_k 2^{k(n-1)} 2^{-ks} < \infty
\] 
for $s >n-1$.  However, being separated alone does not allow us to say anything useful about the dimensions of the limit set.

\begin{ex}\label{example4}
 For each $k \in \mathbb{N}$ let $E_k \subseteq \dn$ consist of  a maximal collection of $2^{-k}$-separated   points all of distance $2^{-k}$ from the boundary and distance at most $2^{-\sqrt{k}}$ from a given point $w \in S^{n-1}$  and let $E = \cup_k E_k$.  Then clearly $E$ is discrete, separated, and $L(E) =  \{w\}$, and so $\hd L(E) = \bd L(E) = 0$ is as small as possible.  However,  
\[
S_E(s)  = \sum_k (\# E_k)2^{-ks}  \approx \sum_k \left( \frac{2^{-\sqrt{k}}}{2^{-k}} \right)^{n-1} 2^{-ks} <\infty
\]
if and only if $s\geq n-1$ and therefore $\delta(E) = n-1$.
 \end{ex}

\section{Main results}

\subsection{Dimension bounds}

Our first result establishes a general bound for the upper box dimension of the limit set, under the assumption that the defining set is both separated and well-approximated; see Definition  \ref{sepWAdef}.  It follows from  Examples \ref{example2} and \ref{example4} that neither condition is sufficient on its own to establish the same conclusion.

\begin{thm} \label{sepWAlma}
If $E \subseteq \dn$ is separated and well-approximated, then
\[
\delta(E) \leq \ubd L(E).
\]
\end{thm}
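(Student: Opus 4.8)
The plan is to establish the equivalent statement that $S_E(s) < \infty$ for every $s > \ubd L(E)$; since $\delta(E)$ is by definition the infimal exponent of convergence of $S_E$, this immediately yields $\delta(E) \leq \ubd L(E)$. First I would decompose $E$ into dyadic shells according to distance from the boundary: for $k \in \mathbb{N}$ set
\[
E_k = \{ x \in E : 2^{-(k+1)} \leq 1-|x| < 2^{-k} \},
\]
so that $1-|x| \approx 2^{-k}$ on $E_k$ and hence
\[
S_E(s) = \sum_{x \in E} (1-|x|)^s \approx \sum_k (\# E_k)\, 2^{-ks}.
\]
Everything then reduces to an effective upper bound on $\# E_k$ in terms of a quantity controlled by the box dimension of $L(E)$, namely a packing number $N_{2^{-(k+1)}}(L(E))$ (recall that separation forces $L(E) \steq S^{n-1}$, so $L(E)$ is bounded and its box dimension is defined).

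The heart of the argument is the geometric estimate $\# E_k \leqs N_{2^{-(k+1)}}(L(E))$, with implicit constant depending only on $n$, $c_1$, $c_2$, and it is here that both hypotheses enter. Using well-approximation, for each $x \in E_k$ I fix a point $z_x \in L(E)$ with $|x - z_x| \leq c_2(1-|x|) \leq c_2 2^{-k}$. Let $P_k$ be a maximal $2^{-k}$-separated subset of $\{z_x : x \in E_k\} \steq L(E)$. Since its points are $2^{-k}$-separated and lie in $L(E)$, the closed balls of radius $2^{-(k+1)}$ about them are pairwise disjoint with centres in $L(E)$, so $P_k$ is a legitimate $2^{-(k+1)}$-packing and $\# P_k \leq N_{2^{-(k+1)}}(L(E))$. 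By maximality every $z_x$ lies within $2^{-k}$ of some $p \in P_k$, and then $x$ lies in $B(p, (c_2+1)2^{-k})$. It remains to bound, for fixed $p$, the number of $x \in E_k$ falling in this ball: separation forces any two distinct points of $E_k$ to lie at distance at least $c_1 2^{-(k+1)}$ apart, so a standard volume-packing count in $\rn$ bounds their number by a constant $C = C(n,c_1,c_2)$. Summing this assignment over $p \in P_k$ yields $\# E_k \leq C\, \# P_k \leqs N_{2^{-(k+1)}}(L(E))$, as claimed.

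Finally, fix $s > \ubd L(E)$ and choose $t'$ with $\ubd L(E) < t' < s$. By the definition of upper box dimension, $N_{2^{-(k+1)}}(L(E)) \leq 2^{(k+1)t'}$ for all large $k$, so combining with the shell estimate gives
\[
S_E(s) \leqs \sum_k 2^{kt'} 2^{-ks} = \sum_k 2^{-k(s-t')} < \infty,
\]
since $s > t'$. I expect the main obstacle to be the geometric estimate on $\# E_k$: one must simultaneously use well-approximation to transport points of $E$ onto genuine limit points, and separation both to guarantee bounded multiplicity of the resulting assignment $x \mapsto p$ and to certify that the selected limit points form an honest packing at the correct scale. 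The dyadic bookkeeping and the closing summation are routine by comparison.
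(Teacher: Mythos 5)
Your argument is correct and rests on the same geometric core as the paper's proof: well-approximation pushes each point of the dyadic shell $E_k$ onto the limit set, and separation gives bounded multiplicity, so that $\# E_k$ is comparable to a packing number of $L(E)$ at scale $2^{-k}$. The only real difference is the direction of the final bookkeeping --- the paper uses divergence of $S_E(\delta-\epsilon)$ to force large packing numbers along a subsequence, while you sum the series directly for $s > \ubd L(E)$ --- and both are valid.
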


\begin{proof}
The general proof strategy is  to  build  an $r$-packing of the limit set from the set of points in $E$ that are roughly distance $r$ away from the boundary and use this packing to relate the upper box dimension of the limit set and the critical exponent.

We may assume that $E$ is infinite and then $\emptyset \neq L(E) \subseteq S^{n-1}$. For a given $r>0$, let
\[
E_r = \{x \in E: \emph{ } 1-|x| \in [r,2r)\}.
\]
Since $E$ is separated, it follows that $\# E_r < \infty$ and that $E_r$ is non-empty for a sequence of $r \searrow 0$.  Define also
\begin{align*}
Z_r &= \{z \in L(E): \emph{ } \exists x \in E_r \emph{ }s.t. \emph{ }|z-x| \leq c_2(1-|x|)\}    \\
&= L(E) \cap \bigcup_{x \in E_r } B(x, c_2(1-|x|))
\end{align*}
where $c_2 \geq 1$ is as in Definition \ref{sepWAdef} (ii). Since $E$ is well-approximated,   $Z_r$ is non-empty whenever $E_r$ is non-empty. Note that
\[
Z_r \subseteq \bigcup_{x \in E_r} B(x, c_2(1-|x|)) \subseteq \bigcup_{x \in E_r} B(x, 2c_2 r)
\]
and that $Z_r \cap B(x,2c_2 r)$ is non-empty for all $x \in E_r$.  Now, for every $x \in E_r$, choose  exactly one $z \in Z_r \cap B(x,2c_2 r)$ and denote this set of $z$ by $\Tilde{Z}$. Observe that each $z \in Z_r$  is contained in $\lesssim 1$  distinct balls from the collection $ \{B(x, 2c_2 r)\}_{x\in E_r}$. To see this, note that if $z \in B(x,2c_2r)$ then 
\[
B(x, c_1 r/2) \subseteq B(x,2c_2r) \subseteq B(z,4c_2r)
\]
and the balls $\{B(x, c_1 r/2)\}_{x\in E_r}$ are pairwise disjoint since $E$ is separated. Here   $c_1 \in (0,1)$ is as in Definition \ref{sepWAdef} (i). Therefore, writing $vol$ to denote   $n$-dimensional volume,
\begin{align*}
\# \{ x \in E_r : z \in  B(x,2c_2r)\}  \leq    \frac{vol(B(z, 4c_2 r))}{\inf_x vol(B(x, c_1 r/2))} \approx \frac{(4c_2r)^n}{(c_1 r/2)^n} =  8^nc_2/c_1 \approx 1.
\end{align*}
In particular, $ \# \Tilde{Z} \gtrsim \# E_r$.

  We wish to extract an $r$-packing of the limit set from the collection $\{B(z,r)\}_{z \in \Tilde{Z}}$.  Suppose    $z, z' \in \Tilde{Z}$ are distinct with $z \in B(x,2c_2r)$ and $z' \in B(x',2c_2r)$ for distinct $x,x' \in E_r$, and are such that $|z-z'| \leq 2r$. Then $z' \in B(x',2c_2r) \cap B(x, (2c_2+2)r)$, from which we deduce $x' \in B(x, (4c_2+2)r)$. However,  since $E$ is separated, the number of distinct $x'$ contained in the ball $B(x, (4c_2+2)r)$ is at most
\[
\frac{vol(B(x, (4c_2+2)r))}{\inf_{x'} vol(B(x',c_1r/2))} \approx \frac{(4c_2+2)^nr^n}{(c_1r/2)^n}  = \frac{2^n(4c_2+2)^n}{c_1^n}   \lesssim 1.
\]
Therefore,  we can extract a subset $Z \subseteq \Tilde{Z}$ with $\# Z \gtrsim \# \Tilde{Z}$, and where the collection $P_r =  \{B(z,r)\}_{z \in Z}$ is  an $r$-packing of $L(E)$. Therefore, for every non-empty $E_r$ we have built an $r$-packing $P_r$ of $L(E)$ with 
\begin{equation} \label{sizeofpacking}
\# P_r \gtrsim \# E_r.
\end{equation}
Write $\delta = \delta(E)$,  assume without loss of generality that $\delta>0$, and let $\epsilon \in (0, \delta)$. By the definition of $\delta$, 
\begin{align*}
\infty  = S_E(\delta-\epsilon)  \leq 1+  \sum_{k \in \mathbb{N}} \sum_{x \in E_{2^{-k}}} (1-|x|)^{\delta-\epsilon} \lesssim \sum_{k \in \mathbb{N}} \# E_{2^{-k}} \cdot 2^{-k(\delta-\epsilon)} .
\end{align*}
This ensures that  infinitely many $k\in \mathbb{N}$ satisfy 
\begin{equation} \label{sizeofer}
\# E_{2^{-k}} \geq k^{-2} \cdot 2^{k(\delta-\epsilon)}.
\end{equation}
Combining \eqref{sizeofpacking} and \eqref{sizeofer}, we get that for a sequence of $r \searrow 0$ 
\[
N_r(L(E)) \gtrsim  \frac{r^{-(\delta-\eps)}}{(\log(1/r))^2}
\] 
and, since our choice of $\epsilon>0$ was arbitrary,   we conclude $\ubd{L(E)} \geq \delta$, as required.
\end{proof}

Given the previous theorem, it is natural to ask for conditions under which the critical exponent gives an \emph{upper} bound for the dimensions of the limit set.  In fact we will not make any additional assumptions here, but we will only bound the dimension of the \emph{radial} limit set. We demonstrated in Example \ref{example1} that the same bound does not hold in general if we replace the radial limit set with the full limit set.

\begin{thm} \label{radlimthm}
If $E \subseteq \dn$ is  discrete, then 
\[
\hd L_{rad}(E) \leq \delta(E).
\]
\end{thm}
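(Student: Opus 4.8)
The plan is to bound the $s$-dimensional Hausdorff measure of the radial limit set directly, by exhibiting at every scale an economical cover that comes straight from the defining property of radial limit points. Since $\hd$ is countably stable and $L_c(E) \steq L_{c'}(E)$ whenever $c \leq c'$, I would first write $\radlim(E) = \bigcup_{m \in \mathbb{N}} L_m(E)$ as an increasing union over integers, so that it suffices to prove $\hd L_c(E) \leq \delta(E)$ for each fixed $c \geq 1$. One may also assume $\delta(E) < \infty$, since otherwise the conclusion is immediate.

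Fix $c \geq 1$ and any $s > \delta(E)$, so that $S_E(s) = \sum_{x \in E}(1-|x|)^s < \infty$. The key observation is that the definition of $L_c(E)$ furnishes a cover at every scale: given $\rho > 0$, applying the defining condition with $r = \rho$ shows that every $z \in L_c(E)$ lies in some ball $B(x, c(1-|x|))$ with $x \in E$ and $1-|x| \leq \rho$. Hence the collection $\{B(x, c(1-|x|)) : x \in E, \ 1-|x| \leq \rho\}$ covers $L_c(E)$, and each such ball has diameter at most $2c\rho$, so this is a genuine $2c\rho$-cover whose mesh tends to $0$ as $\rho \to 0$.

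The main step is then to estimate the $s$-sum of this cover. I would compute
\[
\sum_{\substack{x \in E \\ 1-|x| \leq \rho}} |B(x, c(1-|x|))|^s = (2c)^s \sum_{\substack{x \in E \\ 1-|x| \leq \rho}} (1-|x|)^s,
\]
which is a tail of the convergent series $S_E(s)$. Since $S_E(s) < \infty$, this tail tends to $0$ as $\rho \to 0$: given $\eps>0$ one selects a finite $F \steq E$ with $\sum_{x \in E \setminus F}(1-|x|)^s < \eps$, and then for all $\rho$ smaller than $\min_{x \in F}(1-|x|)$ the remaining sum is bounded by this tail. Therefore $\cur{H}^s_{2c\rho}(L_c(E)) \to 0$, giving $\cur{H}^s(L_c(E)) = 0$ and hence $\hd L_c(E) \leq s$. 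Letting $s \searrow \delta(E)$ and then taking the supremum over $c$ yields $\hd \radlim(E) \leq \delta(E)$.

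This argument is essentially a single covering estimate, so there is no serious analytic obstacle; the real content is recognising that the ``radial'' condition is exactly what is needed to produce a valid cover at arbitrarily fine scales using balls whose radii are slaved to the distances $1-|x|$. The only points requiring a little care are the reduction to a fixed $c$, which is what keeps the covering radii uniformly controlled, and the verification that the relevant tail of $S_E(s)$ genuinely vanishes as $\rho \to 0$ rather than merely being finite.
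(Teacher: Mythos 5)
Your proposal is correct and follows essentially the same route as the paper: both cover $L_c(E)$ by the balls $B(x, c(1-|x|))$ with $1-|x|\leq \rho$ coming straight from the definition of a $c$-radial limit point, bound the $s$-sum by (a tail of) the convergent series $S_E(s)$, and finish with countable stability over integer $c$. The only cosmetic difference is that the paper works at exponent $\delta+\epsilon$ and extracts a factor $r^{\epsilon/2}$ to force the Hausdorff content to zero, whereas you argue directly that the tail of the convergent series vanishes; both are valid.
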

\begin{proof}
For a given $r>0$, let $E^r = \{x \in E: 1-|x| \leq r\}$ and let $c \geq 1$. Note that $\{B(x, c(1-|x|))\}_{x \in E^r}$ is a $2cr$-cover of $L_c(E)$.  Write  $\delta = \delta(E)$, fix $\epsilon>0$ and let $r>0$.  Without loss of generality we may assume $\delta<\infty$.  Then, by the definition of Hausdorff measure,
\begin{align*}
\mathcal{H}_{2cr}^{\delta+\epsilon}(L_c(E)) & \leq \sum_{x \in E^r} (2c(1-|x|))^{\delta+\epsilon} \\
& \leq (2c)^{\delta+\epsilon}r^{\epsilon/2} \sum_{x \in E} (1-|x|)^{\delta+{\epsilon/2}} \\
& = (2c)^{\delta+\epsilon}r^{\epsilon/2} S_E(\delta+{\epsilon/2})
\end{align*}
noting that $ S_E(\delta+{\epsilon/2})<\infty$ by definition.  Letting $r\rightarrow{0}$ we get $\mathcal{H}^{\delta+\epsilon}(L_c(E)) = 0$. Since $\epsilon>0$ was  arbitrary, we conclude that $\hd L_c(E) \leq \delta$. Moreover,
\[
L_{rad}(E) = \bigcup_{c \geq 1} L_c(E) = \bigcup_{\substack{c \geq 1\\
                                                    c \in \mathbb{N}}}
                                                L_c(E)
\]
and so,  by the countable stability of Hausdorff dimension, $\hd L_{rad}(E)  \leq \delta$, as required.  
\end{proof}

Note that in many cases Theorem \ref{radlimthm} can be `upgraded' to give an upper bound for the Hausdorff dimension of the full limit set $L(E)$.  For example, one might know \emph{a priori} that the complement of the radial limit set in the full limit set is at most countable (and therefore the radial limit set carries the full Hausdorff dimension).  Indeed, this is the case for limit sets of geometrically finite Kleinian groups where non-radial limit points are precisely the (countable set of) parabolic fixed points; see \cite{bowditch}.

   Combining Theorems \ref{sepWAlma} and \ref{radlimthm} we get the following result, which gives a precise formula for the Hausdorff and box dimensions under an additional regularity assumption.

\begin{cor} \label{bigThm}
Suppose  $E \subseteq \dn$ is  separated and well-approximated and satisfies  $ \hd L_{rad}(E) = \ubd L(E)$.  Then
\[
\hd L(E) = \dim_{\textup{B}} L(E) = \delta(E).
\]        
\end{cor}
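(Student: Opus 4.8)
The plan is to combine Theorems \ref{sepWAlma} and \ref{radlimthm} with the universal dimension inequalities and the inclusion $L_{rad}(E) \subseteq L(E)$ in a single squeezing argument; no new estimates are needed. First I would verify that the radial limit set is contained in the full limit set. Indeed, if $z \in L_c(E)$, then for every $r>0$ there is some $x \in E$ with $|x-z| \leq c(1-|x|) \leq cr$, so that $|x| \to 1$ and $x \to z$ as $r \to 0$. Since $z \in S^{n-1}$ while $E \steq \dn$, the point $z$ lies in $\overline{E} \setminus E = L(E)$; taking the union over $c \geq 1$ gives $L_{rad}(E) \subseteq L(E)$.

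With this inclusion in hand, monotonicity of Hausdorff dimension yields $\hd L_{rad}(E) \leq \hd L(E)$, while the universal inequalities recalled in the introduction give $\hd L(E) \leq \lbd L(E) \leq \ubd L(E)$. The hypothesis $\hd L_{rad}(E) = \ubd L(E)$ now pins down both ends of the chain
\[
\hd L_{rad}(E) \leq \hd L(E) \leq \lbd L(E) \leq \ubd L(E) = \hd L_{rad}(E),
\]
forcing every term to be equal. In particular the upper and lower box dimensions of $L(E)$ coincide, so $\bd L(E)$ exists and equals $\hd L(E)$.

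It remains to identify this common value with $\delta(E)$. Theorem \ref{radlimthm} supplies $\hd L_{rad}(E) \leq \delta(E)$, using only discreteness, and Theorem \ref{sepWAlma}, which is where both the separated and well-approximated hypotheses enter, supplies $\delta(E) \leq \ubd L(E)$. Since the two outer quantities are already equal by the previous paragraph, $\delta(E)$ is squeezed to the same value, giving $\hd L(E) = \bd L(E) = \delta(E)$ as claimed.

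The argument is essentially bookkeeping once the two theorems are in place, so I do not expect a genuine obstacle. The only point requiring a moment's care is the inclusion $L_{rad}(E) \subseteq L(E)$: the definition of the radial limit set is phrased via the cone-approach condition rather than directly as an accumulation statement, so one must check that a $c$-radial limit point really is an accumulation point of $E$ lying outside $E$. This is exactly what legitimises the monotonicity step $\hd L_{rad}(E) \leq \hd L(E)$ on which the whole squeeze rests.
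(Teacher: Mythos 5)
Your argument is correct and is exactly what the paper intends: the corollary is stated there as an immediate consequence of combining Theorems \ref{sepWAlma} and \ref{radlimthm} via the chain $\hd L_{rad}(E) \leq \hd L(E) \leq \lbd L(E) \leq \ubd L(E)$ together with $\hd L_{rad}(E) \leq \delta(E) \leq \ubd L(E)$. Your extra verification of the inclusion $L_{rad}(E) \subseteq L(E)$ is a sensible piece of due diligence that the paper leaves implicit.
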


\subsection{Sharpness and characterisation of dimension}

Theorem  \ref{sepWAlma} established that, in the separated and well-approximated case, the critical exponent provides a lower bound for the upper box dimension of the limit set.  In the next theorem we show that this bound is sharp.  Moreover, by viewing this theorem in reverse, that is, viewing an arbitrary closed subset of the boundary as a limit set  in our context, this provides a characterisation of the upper box dimension of an arbitrary compact set.

\begin{thm} \label{boxdimsharpness}
Suppose $X \subseteq S^{n-1}$ is non-empty and closed. Then 
\[
\ubd X = \max_{E} \{\delta(E): L(E)=X\}
\]
where the maximum is taken over all $E \subseteq \dn$ which are separated and well-approximated.
\end{thm}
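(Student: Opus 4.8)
My plan is to establish the two inequalities implicit in the statement, with essentially all of the work going into constructing a single admissible $E$ that attains the maximum. For the easy direction, observe that if $E \subseteq \dn$ is separated and well-approximated with $L(E) = X$, then Theorem~\ref{sepWAlma} immediately gives $\delta(E) \le \ubd L(E) = \ubd X$. Thus $\ubd X$ dominates every member of the set $\{\delta(E) : L(E) = X\}$ taken over the admissible class, and to finish it suffices to exhibit one separated and well-approximated $E$ with $L(E) = X$ and $\delta(E) = \ubd X$. This simultaneously shows the supremum equals $\ubd X$ and that it is attained, so that the $\max$ is legitimate.

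For the construction I would discretise $X$ scale by scale and push it radially inwards. For each $k \in \mathbb{N}$ fix a maximal $2^{-k}$-separated subset $X_k \subseteq X$ and set $M_k = \# X_k$; maximality forces $X_k$ to be a $2^{-k}$-net of $X$, and by the standard comparison of separated-set, packing and covering numbers one has $\limsup_{k} \frac{\log M_k}{k \log 2} = \ubd X$. I would then define
\[
E = \bigcup_{k \in \mathbb{N}} \{ (1-2^{-k})\,z : z \in X_k\},
\]
so that each $x = (1-2^{-k})z \in E$ lies at distance $1 - |x| = 2^{-k}$ from $S^{n-1}$ and has its generating point $z \in X$ at distance $|x - z| = 2^{-k} = 1 - |x|$.

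It then remains to check the four properties. Separation is an elementary case analysis: two points of $E$ at a common scale $k$ are radial images of $2^{-k}$-separated sphere points and so are $\gtrsim 2^{-k}$ apart, while two points at distinct scales differ in modulus by at least $\tfrac12$ of the larger of $1-|x|$ and $1-|y|$. Combining these gives $|x-y| \ge \tfrac12 \max\{1-|x|,\,1-|y|\}$ for distinct $x, y \in E$, so $c_1 = 1/4$ works (and in particular $E$ is discrete). Well-approximation holds with $c_2 = 1$ directly from $|x - z| = 1 - |x|$, once we know $z \in X = L(E)$. For $L(E) = X$: the net property of the $X_k$ shows every $z \in X$ is a limit of points of $E$, giving $X \subseteq L(E)$; conversely separation forces $L(E) \subseteq S^{n-1}$, and any $w \in L(E)$ is a limit of radial images $(1 - 2^{-k_j}) z_j$ with $z_j \in X$ and $k_j \to \infty$, so $z_j \to w$ and closedness of $X$ yields $w \in X$. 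Finally, grouping $E$ by scale gives $S_E(s) \approx \sum_k M_k 2^{-ks}$, and since this series converges exactly when $s > \limsup_k \frac{\log M_k}{k\log 2}$, we conclude $\delta(E) = \ubd X$.

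The main obstacle is the identity $L(E) = X$ rather than any single estimate: the inclusion $X \subseteq L(E)$ hinges on the maximal separated sets being nets, while $L(E) \subseteq X$ uses both that separation excludes interior accumulation (so $L(E) \subseteq S^{n-1}$) and that $X$ is closed. The only genuinely quantitative ingredient, the evaluation $\delta(E) = \limsup_k \frac{\log M_k}{k\log 2}$, reduces to the elementary convergence test for $\sum_k M_k 2^{-ks}$ noted above.
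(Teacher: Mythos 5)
Your proposal is correct and follows essentially the same route as the paper: discretise $X$ at dyadic scales via maximal $2^{-k}$-separated sets, push the resulting nets radially inward to depth $2^{-k}$, verify separation, well-approximation and $L(E)=X$, and read off $\delta(E)=\ubd X$ from the convergence of $\sum_k M_k 2^{-ks}$. The only cosmetic difference is that the paper takes a maximal separated subset of the whole sphere and keeps the points within $2^{-k}$ of $X$, whereas you take a maximal separated subset of $X$ itself; both yield comparable cardinalities and the same conclusion.
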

\begin{proof}
The inequality $\ubd X  \geq \sup_{E } \delta(E )$ follows from Theorem  \ref{sepWAlma}, and we show the reverse (as well as the fact that the maximum exists)  by constructing a set $E $ satisfying the desired properties and  with $\ubd X  \leq \delta(E )$. 

Given $k \in \mathbb{N}$   let $\{y^k_i\}_i$ be a maximal  $2^{-k}$-separated subset of  $S^{n-1}$ and define the sets
\[
E_k = \left\{(1-2^{-k}) \cdot y_i^k :   \inf_{z \in X} |z - y_i^k| \leq 2^{-k}  \right\} \subseteq \dn
\]
and $E = \cup_k E_k$.  Then $L(E) = X$ and $E$ is both separated and well-approximated.  The set $E$ could have been extracted implicitly from the theory of Whitney decompositions (see the Whitney covering lemma) but we prefer to give an explicit and self-contained proof.  Observe that, for each $k \in \mathbb{N}$,
\[
\{ B(x, 2^{1-k})\}_{x \in E_k}
\]
is a cover of $X$ and so 
\[
\sup_k   (\# E_k )  2^{-ks} = \infty
\]
for $s < \ubd X$.  For such $s$,
\[
S_E(s) = \sum_k \sum_{x \in E_k} (1-|x|)^{s} = \sum_k  (\# E_k)   2^{-ks}  = \infty
\]
and so $\delta(E) \geq s $ and we conclude $\ubd X  \leq \delta(E )$ as required.
\end{proof}

Next we consider sharpness for Theorem \ref{radlimthm}. Analogous  to above, we provide a characterisation of the Hausdorff  dimension of an arbitrary compact set.

\begin{thm} \label{radlimNOTInf}
Suppose $X \subseteq S^{n-1}$ is non-empty and closed. Then 
\[
\hd X = \min_{E} \{\delta(E): \radlim(E)=X\}
\]
where the minimum  is taken over all $E \subseteq \dn$ which are separated and well-approximated.
\end{thm}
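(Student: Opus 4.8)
The lower bound is immediate: since each competing $E$ is in particular discrete with $\radlim(E)=X$, Theorem~\ref{radlimthm} gives $\hd X = \hd \radlim(E) \le \delta(E)$, so $\hd X \le \inf_E \delta(E)$. The content is therefore to construct a \emph{single} separated and well-approximated $E$ with $\radlim(E)=X$ and $\delta(E) \le \hd X$; applying Theorem~\ref{radlimthm} to this $E$ then forces $\delta(E) = \hd X$ and shows the infimum is attained, so the stated minimum exists and equals $\hd X$.

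Write $s = \hd X$ and exploit that $\mathcal{H}^{s+1/m}(X)=0$ for every $m$. For each $m$ the plan is to fix, using compactness of $X$, a \emph{finite} cover by balls $B(y_{m,i},\rho_{m,i})$ with centres $y_{m,i}\in X$, radii $\rho_{m,i}\le 2^{-m}$, and $\sum_i \rho_{m,i}^{s+1/m}\le 2^{-m}$. I would round each radius down to a dyadic scale $2^{-k(m,i)}$ with $2^{-k(m,i)}\le \rho_{m,i}<2^{-k(m,i)+1}$ (so $k(m,i)\ge m$) and consider the candidate point $(1-2^{-k(m,i)})y_{m,i}$, lying at depth exactly $2^{-k(m,i)}$. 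Organising by dyadic depth, for each $k$ I let $Y_k$ be the finite set of centres assigned to depth $2^{-k}$, pass to a maximal $2^{-k}$-separated subset $\widehat{Y}_k\subseteq Y_k$, and set $E=\bigcup_k\{(1-2^{-k})y: y\in\widehat Y_k\}$.

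Then I would verify the four required properties. Separation splits into two regimes: two points at a common depth $2^{-k}$ sit over $2^{-k}$-separated centres and so are $\gtrsim 2^{-k}$ apart, while two points at distinct dyadic depths automatically satisfy $|x-x'|\ge ||x|-|x'||\ge \tfrac12 2^{-\min(k,k')}$, i.e.\ at least half the larger depth — so the cross-scale separation is free and only the within-scale net requires care. Well-approximation holds with constant $1$, since $(1-2^{-k})y$ lies at distance $2^{-k}=1-|x|$ from $y\in X=L(E)$. For the radial set, maximality of $\widehat Y_k$ means every centre $y_{m,i}$ is within $2^{-k(m,i)}$ of a kept centre, so a short triangle inequality shows every $z\in X$ is approached within a fixed cone (aperture $c=4$) at depths $\le 2^{-m}\to 0$; since only finitely many points lie at each depth, accumulation points of $E$ lie in $\overline X = X$, yielding $\radlim(E)=L(E)=X$.

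The crux is the estimate $\delta(E)\le s$. As every point sits at a dyadic depth, $S_E(t)=\sum_k \#\widehat Y_k\,2^{-kt}$, and bounding $\#\widehat Y_k\le \#Y_k\le\sum_{m\le k}\#\{i:k(m,i)=k\}$ together with $\#\{i:k(m,i)=k\}\le 2^{k(s+1/m)}\sum_{i:k(m,i)=k}\rho_{m,i}^{s+1/m}$ reduces matters, after exchanging the order of summation, to controlling $\sum_m\sum_{k\ge m}2^{k(s+1/m-t)}\Sigma_{m,k}$ where $\Sigma_{m,k}=\sum_{i:k(m,i)=k}\rho_{m,i}^{s+1/m}$ satisfies $\sum_k\Sigma_{m,k}\le 2^{-m}$. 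For fixed $t>s$ the generations with $s+1/m<t$ contribute a geometric tail $\sum 2^{-m}$, which is exactly where the diagonal choice of exponents $s+1/m\downarrow s$ pays off. The one genuine obstacle is the finitely many small generations with $s+1/m\ge t$, where the factor $2^{k(s+1/m-t)}$ grows in $k$; here I would use that each such cover is finite and so contributes only finitely many points to $E$, hence a finite amount to $S_E(t)$ regardless of the exponent. I expect this interplay — economical variable-scale Hausdorff covers forcing a thinning step that must not damage the radial approximation, combined with the per-generation finiteness needed to absorb the bad exponents — to be the main difficulty.
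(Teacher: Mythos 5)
Your proof is correct and follows the same overall strategy as the paper's: build $E$ by placing points radially above the centres of efficient Hausdorff covers of $X$, at depth comparable to the covering radius, after thinning each generation to enforce separation. The implementation differs in three ways worth noting. First, you thin each generation by passing to a maximal $2^{-k}$-separated net of centres, whereas the paper invokes the Vitali covering lemma to extract a disjoint subcollection whose $3$-fold enlargements still cover; these play the same role (every discarded centre stays within a bounded multiple of the scale of a kept one, which is what the $\radlim(E)\supseteq X$ argument needs), and both give within-generation separation. Second, you round all depths to dyadic scales so that cross-generation separation is automatic from $\bigl||x|-|x'|\bigr|\geq \tfrac12 2^{-\min(k,k')}$, whereas the paper imposes the explicit radius-decay condition \eqref{layers} between generations; again equivalent in effect. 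Third, and most substantively, your diagonal choice of exponents $s+1/m$ with $\sum_i \rho_{m,i}^{s+1/m}\leq 2^{-m}$ produces a \emph{single} set $E$ with $S_E(t)<\infty$ for every $t>\hd X$, hence $\delta(E)\leq \hd X$ exactly; the paper fixes one $s>\hd X$ throughout \eqref{usinghausdorffmeasure}, so its construction yields a family $E_s$ with $\delta(E_s)\leq s$, and the closing sentence ``since this holds for all $s>\hd X$'' strictly speaking only gives the infimum, not its attainment, without the diagonalisation you carry out (the paper does deploy essentially this diagonal device later, in the proof sketch of Theorem \ref{radlimNOTInf2}). Your handling of the finitely many generations with $s+1/m\geq t$ by per-generation finiteness is the right way to close that loop. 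So the proposal is sound and, on the attainment of the minimum, slightly more careful than the printed argument.
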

\begin{proof}
The inequality $\hd X  \leq \inf_{E } \delta(E )$ follows from Theorem  \ref{radlimthm}, and we show the reverse (as well as the fact that the minimum exists)  by constructing a set $E $ with the desired properties and with $\hd X  \geq \delta(E )$. 

Fix $s > \hd X$. Then for all $k \in \mathbb{N}$, there is an index set $I_k$ and a collection of balls $\{B(z_{i_k},r_{i_k})\}_{i_k \in I_k}$ with each  $z_{i_k} \in X$ and $0<r_{i_k} \leq 2^{-k}$ such that
\begin{equation} \label{usinghausdorffmeasure}
X\subseteq \bigcup_{I_k} B(z_{i_k},r_{i_k}) \ \text{ and } \ \sum_{I_k} r_{i_k}^s < 2^{-k}.
\end{equation}
Since $X$ is compact, we may assume $\# I_k < \infty$ for all $k$ and such that
\begin{equation} \label{layers}
\min_{i_k \in I_k} r_{i_k} \geq 2 \max_{i_{k+1} \in I_{k+1}} r_{i_{k+1}}  
\end{equation}
 for all $k$. By the Vitali covering lemma, for each $k \in \mathbb{N}$, there is a subset $J_k \subseteq I_k$ so that the subcollection $B_k = \{B(z_{j_k},r_{j_k})\}_{j_k \in J_k} \subseteq \{B(z_{i_k},r_{i_k})\}_{i_k \in I_k}$ is disjoint and 
\[
X \subseteq \bigcup_{I_k} B(z_{i_k},r_{i_k}) \subseteq \bigcup_{J_k} B(z_{j_k},3r_{j_k}).
\]
We define $E$ using $B_k$. For each $B(z_{j_k},r_{j_k}) \in B_k$, let $x_{j_k}$ be the point on the line segment between $0$ and $z_{j_k}$ with $|x_{j_k}| = 1-r_{j_k}$. Then take
\[
E = \bigcup_k \bigcup_{j_k \in J_k} \{x_{j_k}\}.
\]
First we check $X = \radlim (E)$. Let $w \in \radlim (E)$, in which case $w \in L_c(E)$ for some $c>0$ by definition. For all $r>0$, there exists $x = x_{j_k} \in  E \cap B(w,c(1-|x|)) \subseteq B(w,cr)$ for some $j_k \in J_k$.  But  $z = z_{j_k} \in X$ with $|x-z| = 1-|x|$ and hence, $|w-z| \leq cr$. Since $c$ is fixed, $r>0$ was arbitrary and $X$ is closed, we conclude $w \in X$.

For the backwards inclusion, again let $r>0$ be given and let $z \in X$. Then there exists $k \in \bb{N}$ such that $r_{j_k} \leq r$ for all $j_k \in J_k$, and where $z \in B(z_{j_k},3r_{j_k})$ for some $j_k \in J_k$. Then $x_{j_k} \in E$ with $|z_{j_k} - x_{j_k}| = 1-|x_{j_k}|$. Hence, $|z - x_{j_k}| \leq 3r_{j_k} + 1-|x_{j_k}| = 4(1-|x_{j_k}|) \leq 4r$ and $z \in \radlim(E)$, which proves the claim. Similarly we see $E$ is well-approximated.

Next we show $E$ is separated. Fix $x_{j_k} \in E$ and consider $x_{l_k} \in E$ for some $l \neq j$. Since $B(z_{j_k},r_{j_k})$ and $B(z_{l_k},r_{l_k})$ are disjoint,
\[
|x_{j_k} - x_{l_k}| \geq\frac{1}{2} |z_{j_k} - z_{l_k}|  \geq  \frac{1}{2} \max\{ r_{j_k}, r_{l_k}\}  \geq \frac{1}{2}  (1-|x_{j_k}|). 
\]
Otherwise, for $x_{j'_l} \in E$ with $k \neq l$,
\[
|x_{j_k} - x_{j'_l}| \geq \frac{1}{2} (1-|x_{j_k}|)
\]
by \eqref{layers}. This proves $E$ is separated. 

Finally, using \eqref{usinghausdorffmeasure}
\begin{align*}
S_{E}(s) = \sum_k \sum_{j_k \in J_k} (1-|x_{j_k}|)^s = \sum_k \sum_{j_k \in J_k} r_{j_k}^s \leq \sum_k 2^{-k} < \infty.
\end{align*}
Therefore,  $\delta(E) \leq s$ and since this holds for all $s > \hd X$, we conclude $\delta(E) \leq \hd X$, as required. 
\end{proof}

Note that, unlike in Theorem \ref{boxdimsharpness}, it was not  necessary to assume $E$ is separated and well-approximated in Theorem \ref{radlimNOTInf}.  However, the result is made stronger by including these conditions since we show we can achieve the minimum within a restricted (and well-behaved) family of sets.  

The full limit set is always closed and so the assumption that $X$ is closed in Theorem \ref{boxdimsharpness} does not impose any restrictions.  However, the radial limit set need not be closed, and so ideally we would remove this assumption in Theorem \ref{radlimNOTInf}.  That said, the radial limit set cannot be an arbitrary set and so we first we consider the set theoretic complexity of the radial limit set in general.

\begin{prop}
For an arbitrary non-empty discrete set $E \subseteq \dn$, the associated radial limit set $\radlim(E)$ is a  $\mathcal{G}_{\delta\sigma}$ set.  
\end{prop}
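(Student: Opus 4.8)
The plan is to exhibit $\radlim(E)$ as a countable union of $\mathcal{G}_\delta$ sets. The only real subtlety is that the sets $L_c(E)$ are defined with \emph{closed} balls: writing out the definition gives
\[
L_c(E) = \bigcap_{r>0}\{z \in \bdry : \exists\, x \in E \text{ with } 1-|x| \leq r \text{ and } |x-z| \leq c(1-|x|)\},
\]
and the inner set is a countable union (over the countable set $\{x \in E : 1-|x|\leq r\}$) of closed spherical caps, hence merely $F_\sigma$. A naive analysis therefore only places $L_c(E)$ in a higher Borel class than $\mathcal{G}_\delta$. The key observation that rescues the argument is that $\radlim(E)$ is a union over $c$, so there is slack in the constant and we may trade closed balls for open balls at the cost of enlarging $c$ slightly.

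Concretely, for $c \geq 1$ and $r>0$ I would introduce the relatively open subset of $\bdry$
\[
V_{c,r} = \{z \in \bdry : \exists\, x \in E \text{ with } 1-|x| < r \text{ and } |x-z| < c(1-|x|)\},
\]
which is open in $\bdry$ because it is a union, over the qualifying $x \in E$, of the relatively open caps $\{z \in \bdry : |x-z| < c(1-|x|)\}$. Since $V_{c,r}$ is monotone in $r$, the set $\tilde L_c(E) := \bigcap_{r>0} V_{c,r} = \bigcap_{m \in \mathbb{N}} V_{c,1/m}$ is a $\mathcal{G}_\delta$ subset of $\bdry$.

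Next I would establish the sandwich $\tilde L_c(E) \steq L_c(E) \steq \tilde L_{c'}(E)$ for all $c' > c \geq 1$. The first inclusion is immediate, as strict inequalities imply non-strict ones. For the second, given $z \in L_c(E)$ and $r>0$, apply the defining property of $L_c(E)$ with radius $r/2$ to obtain $x \in E$ with $1-|x| \leq r/2 < r$ and $|x-z| \leq c(1-|x|) < c'(1-|x|)$, where strictness uses $1-|x|>0$ and $c<c'$; hence $z \in V_{c',r}$ for every $r>0$, that is, $z \in \tilde L_{c'}(E)$. Combining the sandwich (taking $c' = c+1$) with the monotonicity of both families in $c$ and restricting to integer $c$, exactly as in the proof of Theorem \ref{radlimthm}, yields
\[
\radlim(E) = \bigcup_{c \geq 1} L_c(E) = \bigcup_{c \in \mathbb{N}} \tilde L_c(E).
\]
As this is a countable union of $\mathcal{G}_\delta$ sets, $\radlim(E)$ is $\mathcal{G}_{\delta\sigma}$, as required.

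The \emph{main obstacle} is precisely the closed-versus-open ball issue flagged in the first paragraph: without exploiting the freedom in $c$, the closed-ball definition does not obviously give a $\mathcal{G}_{\delta\sigma}$ set, and the entire argument hinges on the elementary point that passing from $c$ to any $c'>c$ upgrades the non-strict cap inequalities to strict (open) ones while leaving the union unchanged.
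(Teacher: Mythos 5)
Your proof is correct and takes essentially the same approach as the paper: the paper likewise writes $\radlim(E) = \bigcup_{c \in \mathbb{N}} \bigcap_{r \in (0,1)\cap\mathbb{Q}} \bigcup_{x \in E^r} C_{c,x}$ with $C_{c,x}$ the \emph{open} cap $\{z \in \bdry : |x-z| < c(1-|x|)\}$, which is exactly your $\bigcup_c \bigcap_r V_{c,r}$. Your explicit sandwich $\tilde L_c(E) \steq L_c(E) \steq \tilde L_{c+1}(E)$ is a careful justification of the set identity that the paper asserts without detailed verification.
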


\begin{proof}
Given  $r>0$, define 
\[
E^r = \{ x \in E : 1-|x| <r\}
\]
and, given  $c \geq 1$ and $x \in\dn$, define 
\[
C_{c,x} = \{ z \in S^{n-1}  : |x-z| < c(1-|x|)  \} .
\]
Then, 
\[
\radlim(E) = \bigcup_{c \in \mathbb{N}} \bigcap_{r \in (0,1) \cap \mathbb{Q}} \bigcup_{x \in E^r}C_{c,x}.
\]
Then, noting that $E$ (and so each $E^r$) is countable since it is a discrete subset of $\dn$ and using that $C_{c,x}$ is open, we get that  $\radlim(E)$ is a  $\mathcal{G}_{\delta\sigma}$ set as required.
\end{proof}

Unfortunately, we do not know how to prove Theorem \ref{radlimNOTInf} for arbitrary $\mathcal{G}_{\delta\sigma}$ sets, but we can at least partially extend it to $\mathcal{F}_\sigma$ sets; one class lower in the Borel hierarchy.

\begin{thm} \label{radlimNOTInf2}
Suppose $X \subseteq S^{n-1}$ is a non-empty $\mathcal{F}_\sigma$ set. Then 
\[
\hd X = \min_{E} \{\delta(E): \radlim(E) \supseteq X\}
\]
where the minimum  is taken over all $E \subseteq \dn$ which are separated and well-approximated.
\end{thm}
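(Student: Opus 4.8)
The plan is to prove the two inequalities separately and to exhibit a minimiser. The bound $\hd X \le \inf_E \delta(E)$ is immediate: for any admissible $E$ we have $X \subseteq \radlim(E)$, so monotonicity of Hausdorff dimension together with Theorem \ref{radlimthm} gives $\hd X \le \hd \radlim(E) \le \delta(E)$. For the reverse inequality I would exploit the $\mathcal{F}_\sigma$ structure. Writing $X = \bigcup_m F_m$ with each $F_m$ closed, and replacing $F_m$ by $X_m := \bigcup_{j \le m} F_j$, I may assume the $X_m$ are closed and increasing with $\bigcup_m X_m = X$; countable stability of Hausdorff dimension then gives $d := \hd X = \sup_m \hd X_m$. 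Applying Theorem \ref{radlimNOTInf} to each closed set $X_m$ produces a separated, well-approximated set $E^{(m)}$ with $\radlim(E^{(m)}) = X_m$ and $\delta(E^{(m)}) = \hd X_m \le d$.

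The naive union $\bigcup_m E^{(m)}$ need not have a convergent accumulation series even for $s>d$, so I would first thin each piece towards the boundary. Since $\delta(E^{(m)}) \le d < d + 1/m$, the series $S_{E^{(m)}}(d+1/m)$ converges, so I may pick $N_m$ with $S_{\tilde E^{(m)}}(d+1/m) \le 2^{-m}$, where $\tilde E^{(m)} := \{x \in E^{(m)} : 1-|x| \le 2^{-N_m}\}$. Discarding points bounded away from the boundary changes neither the radial limit set (which depends only on arbitrarily small scales) nor, since only finitely many points are removed, the full limit set; hence $\radlim(\tilde E^{(m)}) = X_m$ and each $\tilde E^{(m)}$ stays separated and well-approximated. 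Put $\hat E = \bigcup_m \tilde E^{(m)}$. For a fixed $s>d$ and every $m \ge 1/(s-d)$ one has $s \ge d + 1/m$, whence (all scales being $<1$) $S_{\tilde E^{(m)}}(s) \le S_{\tilde E^{(m)}}(d+1/m) \le 2^{-m}$; the finitely many remaining terms satisfy $S_{\tilde E^{(m)}}(s) \le S_{E^{(m)}}(s) < \infty$ because $s > d \ge \hd X_m = \delta(E^{(m)})$. Summing, $S_{\hat E}(s) < \infty$ for all $s>d$, so $\delta(\hat E) \le d$, while monotonicity of the radial limit set gives $\radlim(\hat E) \supseteq \bigcup_m \radlim(\tilde E^{(m)}) = X$.

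The set $\hat E$ now has the correct critical exponent and radial limit set but need not be separated, since distinct pieces may place nearly coincident points at comparable scales (for instance where the $X_m$ overlap). To repair this I would apply the Vitali covering lemma to the bounded-radius family $\{B(x,\tfrac1{10}(1-|x|))\}_{x\in\hat E}$, extracting $E \subseteq \hat E$ for which these balls are pairwise disjoint while $\bigcup_{x\in\hat E}B(x,\tfrac1{10}(1-|x|)) \subseteq \bigcup_{x\in E}B(x,\tfrac12(1-|x|))$. Disjointness yields separation of $E$ with a universal constant, and $E \subseteq \hat E$ gives $\delta(E) \le \delta(\hat E) \le d$. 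The covering property preserves radial points: any $x \in \hat E$ lies in $B(x',\tfrac12(1-|x'|))$ for some $x'\in E$, which forces $1-|x'| \approx 1-|x|$, so an $\hat E$-approximant of a point $z \in X$ may be replaced by the comparable-scale retained point $x'$, giving $z \in \radlim(E)$ with a constant depending on $z$ (which is permitted). Finally $E$ is well-approximated: for $x \in \tilde E^{(m)} \cap E$ its construction-witness $z_{j_k} \in X_m \subseteq \radlim(E) \subseteq L(E)$ satisfies $|x - z_{j_k}| = 1-|x|$. Thus $E$ is admissible with $\delta(E) \le d = \hd X$, and with the lower bound $\delta(E) \ge \hd X$ this forces $\delta(E) = \hd X$, so the minimum is attained.

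I expect the crux to be the separation step. Because single-scale covers detect box rather than Hausdorff dimension, the pieces must be genuinely multi-scale to keep the critical exponent equal to $\hd X_m$, and this multi-scale structure makes collisions between pieces at comparable scales unavoidable; they cannot be removed by confining pieces to disjoint thin shells without destroying radial approximation. The global Vitali thinning sidesteps this, and the delicate point to verify is precisely that thinning preserves the radial limit set, i.e. that each discarded approximant is replaced by a retained one at a comparable distance from the boundary. The boundary-truncation of the second step, needed only to force convergence of the combined accumulation series across infinitely many pieces, is a secondary but essential ingredient.
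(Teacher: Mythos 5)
Your argument is correct, but it reaches the conclusion by a genuinely different route from the paper. The paper does not use Theorem \ref{radlimNOTInf} as a black box: it re-runs that construction for each closed piece $X_m$, restricting the scales used for $E_m$ to a designated countably infinite subfamily of the dyadic windows $\{[2^{-k},2^{1-k})\}_{k\in 2\mathbb{N}}$, with the subfamilies for distinct $m$ pairwise disjoint, so that the pieces occupy well-separated shells and the union is automatically separated; summability of the combined accumulation series is then forced by tightening the cover in \eqref{usinghausdorffmeasure} to $\sum_{I_k} r_{i_k}^s < 2^{-k}2^{-m}$. You instead leave each $E^{(m)}$ untouched, truncate it near the boundary so that $\sum_m S_{\tilde E^{(m)}}(s)$ converges for every $s>\hd X$ (legitimate, since only finitely many points are discarded and neither the limit set nor the radial limit set changes), and then repair the lost separation a posteriori with a $5r$-covering argument, verifying that every discarded point has a retained neighbour at a comparable distance from the boundary so that radial approximation survives with a worse, but still only $z$-dependent, cone constant; well-approximation of the thinned set follows because each surviving point retains its construction witness in $X\subseteq\radlim(E)\subseteq L(E)$. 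Your approach buys modularity --- you avoid having to check that the earlier construction tolerates the scale-window constraint, a verification the paper explicitly leaves to the reader --- at the cost of the extra Vitali thinning and the scale-comparability check. One small remark: your closing comment that pieces cannot be confined to disjoint thin shells without destroying radial approximation is not quite fair to the shell-based alternative, since the paper assigns each piece \emph{infinitely many} shells accumulating at the boundary, which is precisely what keeps each $\radlim(E_m)$ intact.
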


\emph{Proof sketch.}
Once again, the inequality $\hd X  \leq \inf_{E } \delta(E )$ follows from Theorem  \ref{radlimthm}, and it remains to show the reverse  by constructing a set $E$ with the desired properties and  with $\hd X  \geq \delta(E )$. 

Write $X = \cup_{m} X_m$ where each $X_m$ is non-empty and closed.  By Theorem \ref{radlimNOTInf}, for each $m$ there exists $E_m$ such that 
\[
\hd X_m = \delta(E_m),
\]
 $\radlim(E_m) = X_m$, and $E_m$ is separated and well-approximated.  Setting $E =  \cup_{m} E_m$, we immediately get
\[
\hd X = \sup_m \hd X_m = \sup_m \delta(E_m) \leq \delta(E)
\]
and $\radlim(E) \supseteq \cup_m \radlim(E_m)  = \cup_m X_m =    X$ and also that $E$ is well-approximated.  These observations are not enough to prove the theorem and so in order to upgrade our conclusions consider the countable collection of `scale windows' $\{[2^{-k}, 2^{1-k})\}_{k \in 2\mathbb{N}}$ and partition this collection into countably infinite  many countably infinite subcollections.  We assign the subcollections to the indices $m$ and then insist that all the scales used in the construction of $E_m$ belong only to scale windows in the subcollection associated with $m$.  One has to check that the proof of  Theorem \ref{radlimNOTInf} can be modified in this way, but we leave this to the reader.  This ensures that the sets $E_m$ do not `see each other'.  This is  enough to ensure that $E$ is  separated.  Moreover, when running the proof of  Theorem \ref{radlimNOTInf} for $X_m$  replace \eqref{usinghausdorffmeasure} with
\[
X_m\subseteq \bigcup_{I_k} B(z_{i_k},r_{i_k}) \ \text{ and } \ \sum_{I_k} r_{i_k}^s < 2^{-k}2^{-m}
\]
so that, at the end,
\[
S_{E}(s) = \sum_m \sum_k \sum_{j_k \in J_k} (1-|x_{j_k}|)^s = \sum_m\sum_k \sum_{j_k \in J_k} r_{j_k}^s \leq\sum_m \sum_k 2^{-k}2^{-m} < \infty
\]
for all $s>\hd X$. This ensures  that $\delta(E) \leq  \hd X$, completing the proof.
\hfill \qed

In Theorem \ref{radlimNOTInf2} it would be good to replace $\radlim(E) \supseteq X$ with   $\radlim(E) =  X$ but we do not know how to do this.

\section{Kleinian limit sets---revisited}

In this section, we briefly revisit the setting of Kleinian groups.  Part of the motivation here is to understand exactly what properties of the Kleinian group are needed to make statements about the dimension theory of the limit set. Note that, even though the Poincar\'e series and our series $S_E(s)$ are not precisely the same, it is easy to see that $\delta'(\Gamma) = \delta(\Gamma(0))$ where $\delta(\Gamma(0))$ is the critical exponent of the discrete set  $\Gamma(0)$. From now on we write $\delta(\Gamma)$ for both exponents without distinguishing between them.  In this section we use some basic hyperbolic geometry and elementary facts about Kleinian groups; see \cite{beardon, maskit} for more details.

\begin{lma} \label{klnSEP}
 Suppose $\Gamma$ is a Kleinian group and  $z \in \mathbb{D}^n$ is not fixed by any element of $\Gamma$. Then the orbit $\Gamma(z)$ is separated.  
\end{lma}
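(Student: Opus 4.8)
The plan is to first establish a uniform lower bound on the hyperbolic separation of the orbit, and then convert this into the required Euclidean separation via the explicit distance formula in the ball model. Throughout I work with the hyperbolic metric $d_H$ on $\mathbb{D}^n$.

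First I would record that $\Gamma$, being discrete, acts properly discontinuously by isometries on the proper metric space $(\mathbb{D}^n, d_H)$; in particular, for any $R>0$ only finitely many $g \in \Gamma$ satisfy $d_H(z, g(z)) \le R$. Since $z$ is fixed by no nontrivial element of $\Gamma$, the orbit map $g \mapsto g(z)$ is injective, so all the distances $d_H(z, g(z))$ with $g \neq \mathrm{id}$ are strictly positive. Combining these two facts, a minimum over the finite set of orbit points within hyperbolic distance $R$ of $z$ shows that
\[
\rho := \inf_{g \in \Gamma \setminus \{\mathrm{id}\}} d_H(z, g(z)) > 0.
\]
Crucially, this bound is uniform across the whole orbit: because $\Gamma$ acts by isometries, for any $h \in \Gamma$ and any $g \neq h$ we have $d_H(h(z), g(z)) = d_H(z, h^{-1}g(z)) \geq \rho$, as $h^{-1}g$ ranges over $\Gamma \setminus \{\mathrm{id}\}$. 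Hence any two distinct points of $\Gamma(z)$ are at hyperbolic distance at least $\rho$.

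Next I would translate this into Euclidean terms. Writing $x = h(z)$ and $t = 1 - |x|$, it suffices to find $c_1 \in (0,1)$, depending only on $\rho$, such that $|x - y| < c_1 t$ forces $d_H(x,y) < \rho$ for any $y \in \mathbb{D}^n$; by the previous paragraph this is incompatible with $y$ being a distinct orbit point, so $B(x, c_1 t) \cap \Gamma(z) = \{x\}$, which is exactly separation. To do this I would use the identity
\[
\cosh d_H(x, y) = 1 + \frac{2|x-y|^2}{(1-|x|^2)(1-|y|^2)}.
\]
If $|x - y| < c_1 t$ then $1 - |y| > (1-c_1) t$, so $(1-|x|^2)(1-|y|^2) \ge (1-|x|)(1-|y|) > (1-c_1) t^2$, and therefore $\cosh d_H(x,y) < 1 + 2c_1^2/(1-c_1)$. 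Since the right-hand side tends to $1$ as $c_1 \to 0$, one can fix $c_1$ small enough (in terms of $\rho$ alone) that it is strictly less than $\cosh \rho$, giving $d_H(x,y) < \rho$ as desired.

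I expect the main obstacle to be the first step: deducing the strictly positive, and above all \emph{uniform}, lower bound $\rho$ from mere discreteness. The subtlety is that discreteness of $\Gamma$ as a subgroup of the isometry group does not by itself prevent orbit points from accumulating; the argument must go through proper discontinuity, which relies on $(\mathbb{D}^n, d_H)$ being a proper metric space (closed hyperbolic balls are compact) together with the hypothesis that the stabiliser of $z$ is trivial to rule out coincidences among the finitely many nearby orbit points. The metric conversion in the second step is routine once the explicit $\cosh$ formula is in hand, the only care being to keep $1-|y|$ comparable to $1-|x|$, which is automatic once $c_1 < 1$.
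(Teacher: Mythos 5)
Your proof is correct and follows essentially the same route as the paper: both arguments first establish a uniform positive hyperbolic separation of the orbit (the paper via pairwise disjoint hyperbolic $r$-balls centred at orbit points, you via the positive infimum $\rho$ of pairwise orbit distances --- equivalent formulations, both using discreteness plus the trivial-stabiliser hypothesis) and then convert this to Euclidean separation at scale $1-|x|$. The only difference is cosmetic and lies in the conversion step, where the paper invokes the distortion estimate $|g'(z)| \approx 1-|g(z)|$ for M\"obius maps while you use the explicit $\cosh$ identity for hyperbolic distance in the ball model; both are standard and valid.
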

\begin{proof}
 Let $r>0$ be  such that the collection of balls
\[
\{B_H(g(z),r)\}_{g \in \Gamma}
\]
is pairwise disjoint, where $B_H$ denotes the closed ball in the hyperbolic metric.  To see that such an $r$ exists, observe that, since $\Gamma$ is discrete, the orbit $\Gamma(z)$ is locally finite and so we can choose $r>0$ such that 
\[
B_H(z,2r) \cap \Gamma(z) = \{z\}.
\]
Now suppose  $y \in B_{H}(g_1(z), r) \cap  B_{H}(g_2(z), r) $ for distinct $g_1,g_2 \in \Gamma$. Then 
\[
d_{H}(z, g_1^{-1}g_2(z)) = d_{H}(g_1(z), g_2(z))  \leq d_{H}(g_1(z), y)+d_{H}(y, g_2(z)) \leq 2r
\]
which gives $z \neq g_1^{-1}g_2(z) \in B_{H}(z,2r)$, a contradiction.  We now want to transfer  `separated in the hyperbolic metric' to   `separated in the Euclidean metric', but this follows immediately since the Euclidean diameter of $B_H(g(z),r)= g(B_H(z,r))$ is 
\[
\approx_{z,r} |g'(z)| \approx 1-|g(z)|
\]
for all $g \in \Gamma$.  In particular,  for all $g \in \Gamma$, $B_E(g(z), c(1-|g(z)|)) \cap \Gamma(z) = \{g(z)\}$ for a uniform constant $c$ depending only on $z$ and $r$, where  $B_E$ denotes the closed ball in the Euclidean metric.
\end{proof}

\begin{lma} \label{klnWA}
 Suppose $\Gamma$ is a non-elementary Kleinian group and  $z \in \mathbb{D}^n$. Then the orbit $\Gamma(z)$ is well-approximated.    
\end{lma}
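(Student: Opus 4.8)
The plan is to produce a single constant $c_2 \geq 1$, depending only on $z$ and on the group, such that every orbit point $x = g(z) \in \Gamma(z)$ admits a limit point within distance $c_2(1-|x|)$. The structural input is that the classical limit set $L(\Gamma)$ is $\Gamma$-invariant and, since $\Gamma$ is non-elementary, infinite; moreover $L(\Gamma) \subseteq S^{n-1}$ while $\Gamma(z) \subseteq \dn$, so $L(\Gamma) \subseteq \overline{\Gamma(z)} \setminus \Gamma(z) = L(\Gamma(z))$ and any point of $L(\Gamma)$ is a legitimate witness. The idea is to fix finitely many limit points once and for all and transport them by $g$: for $\xi \in L(\Gamma)$ we have $g(\xi) \in L(\Gamma)$, and I will show that for a suitable choice of $\xi$ the image $g(\xi)$ lands within $O(1-|x|)$ of $x = g(z)$.

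The second step assembles the conformal distortion estimates. For a M\"obius transformation $g$ of $\dn$ one has the exact identity $|g(u) - g(v)| = |g'(u)|^{1/2} |g'(v)|^{1/2} |u-v|$ for $u,v \in \overline{\dn}$, together with the interior formula $|g'(w)| = (1-|g(w)|^2)/(1-|w|^2)$. Applying the identity with $v = g^{-1}(0)$ and using $|g(g^{-1}(0))| = 0$ yields the boundary derivative $|g'(\xi)| = (1-|g^{-1}(0)|^2)/|\xi - g^{-1}(0)|^2$ for $\xi \in S^{n-1}$. Two elementary observations then simplify matters: first $|g^{-1}(0)| = |g(0)|$, because $d_H(0,g^{-1}(0)) = d_H(g(0),0)$ and $d_H(0,\cdot)$ is monotonic in $|\cdot|$; and second, since $d_H(g(0),g(z)) = d_H(0,z)$ is a fixed constant, we have $1 - |g(0)|^2 \approx_z 1-|g(z)|^2 = 1-|x|^2$. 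Feeding the two derivative formulas into the distortion identity with $u = z$, $v = \xi$ and writing $y = g^{-1}(0)$ gives, for any $\xi \in S^{n-1}$,
\[
|g(z) - g(\xi)| = \frac{\sqrt{(1-|x|^2)(1-|y|^2)}}{\sqrt{1-|z|^2}} \cdot \frac{|z-\xi|}{|\xi - y|} \approx_z \frac{1-|x|}{|\xi - y|},
\]
using $1-|y|^2 = 1-|g(0)|^2 \approx_z 1-|x|^2$ and $|z - \xi| \approx_z 1$.

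The final step selects $\xi$ so that the denominator $|\xi - y|$ is bounded below, which is exactly where the non-elementary hypothesis enters. Fix three distinct points $\xi_1, \xi_2, \xi_3 \in L(\Gamma)$ (possible since the limit set is infinite) and set $\rho = \min_{i \neq j} |\xi_i - \xi_j| > 0$. For any point $y \in \rn$ at most one of the $\xi_i$ can lie within $\rho/2$ of $y$, so there is always an index $i$ with $|\xi_i - g^{-1}(0)| \geq \rho/2$. Choosing this $\xi_i$ and recalling $g(\xi_i) \in L(\Gamma)$, the estimate above gives $|g(z) - g(\xi_i)| \lesssim_{z,\rho} 1-|x|$, which is the required inequality with the witness $w = g(\xi_i)$. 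I expect the main obstacle to be a clean derivation of the conformal distortion estimates (in particular the boundary derivative formula and the comparison $1-|g(0)| \approx_z 1-|g(z)|$); after these are in place, the geometric heart of the argument --- that one may always steer away from the single repelling direction $g^{-1}(0)$ precisely because a non-elementary limit set contains several separated points --- is short.
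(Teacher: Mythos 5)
Your proof is correct, but it takes a genuinely different route from the paper's. The paper exploits the existence of a loxodromic element $h\in\Gamma$: its fixed points $w_1,w_2$ lie in the limit set, $g(w_1),g(w_2)$ are the boundary endpoints of the geodesic $g(C)$ passing through $g(x)$ (the image of the point on the axis nearest $z$), and a short geometric argument shows that any point $p$ on a geodesic orthogonal to $S^{n-1}$ lies within $\sqrt{2}\,(1-|p|)$ of one of its two endpoints. You replace all of this with exact conformal machinery: the identity $|g(u)-g(v)|^2=|g'(u)|\,|g'(v)|\,|u-v|^2$, the boundary derivative $|g'(\xi)|=(1-|g^{-1}(0)|^2)/|\xi-g^{-1}(0)|^2$, and a pigeonhole among finitely many separated limit points to keep $|\xi-g^{-1}(0)|$ bounded below; both proofs then rest on the same comparison $1-|g(0)|\approx_z 1-|g(z)|$. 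Your version is quantitative (the constant $c_2$ is explicit in terms of $d_H(0,z)$ and the separation $\rho$ of the chosen $\xi_i$), it uses non-elementarity only through ``$L(\Gamma)$ contains at least two separated points'' --- indeed two points already suffice for your pigeonhole, so the argument also covers elementary groups of loxodromic type and isolates exactly why the parabolic example following the lemma fails --- and it avoids the classification fact that non-elementary Kleinian groups contain loxodromic elements. The one step you lean on without proof is the base-point independence $L(\Gamma)\subseteq\overline{\Gamma(z)}$, which you need so that your witnesses $g(\xi_i)$ genuinely belong to $L(\Gamma(z))$; the paper's choice sidesteps this because $g(w_1),g(w_2)$ are visibly limits of $(ghg^{-1})^{\pm k}(g(z))$. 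This is standard, but if you want your argument fully self-contained you can simply take the $\xi_i$ to be loxodromic fixed points.
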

\begin{proof}
Let   $g \in \Gamma$ be given. Since $\Gamma$ is non-elementary,  $\Gamma$ necessarily contains a loxodromic element $h$ with fixed points $w_1,w_2 \in \LG \subseteq \bdry$. Then the element $ghg^{-1} \in \Gamma$ is loxodromic with fixed points $g(w_1)$ and $g(w_2)$; both of which are also  in the limit set.   We show that $g(z)$ is close to at least one of them.  Let $C \subseteq \dn$ be the unique doubly extended hyperbolic geodesic passing through $w_1,w_2$ and let $x \in C$ minimise the distance $d_H(z,x)$.  Then the Euclidean distance between $g(x)$ and $g(z)$ is 
\begin{equation} \label{est11}
\approx_{z,x} |g'(z)| \approx 1-|g(z)|.
\end{equation}  
Since $g$ preserves the hyperbolic metric, $g(C)$ is the doubly extended hyperbolic geodesic passing through $g(w_1),g(w_2)$ and $g(x)$ and, moreover, $g(C)$ is orthogonal to the boundary.  It then follows by a simple geometric argument that the Euclidean distance between $g(x)$ and at least one of $g(w_1),g(w_2)$ is at most 
\begin{equation} \label{est22}
\sqrt{2}( 1-|g(x)|) \approx 1-|g(z)|.
\end{equation}  
By \eqref{est11} and \eqref{est22} and the triangle inequality, $g(z)$ is within distance  $\approx 1-|g(z)|$ of the limit set  and therefore $\Gamma(z)$ is well-approximated, as required.
\end{proof}

Despite the above, it is not  necessarily true that orbits of  elementary Kleinian group are well-approximated and this explains why $\delta'(\Gamma) >\ubd L(\Gamma)$ can hold in this setting.  

\begin{ex}
A Kleinian group $\Gamma$ generated by a single parabolic element with fixed point $w \in S^{n-1}$ is elementary as its limit set contains only the parabolic fixed point. The orbit $\Gamma(z)$ lies in a horosphere  based at $w$ for all $z \in \dn$. For $g(z) \in \Gamma(z)$ sufficiently close to the boundary, 
\[
|w-g(z)| \geqs (1-|g(z)|)^\frac{1}{2}.
\]
Therefore $\Gamma(z)$ is not well-approximated.
\end{ex}

Given Lemmas \ref{klnSEP} and  \ref{klnWA} and our main results, we have therefore presented a self-contained proof of the following.
\begin{cor}
If  $\Gamma$ is a non-elementary Kleinian group, then
\[
\hd L_{rad}(\Gamma) \leq  \delta(\Gamma)  \leq  \ubd L(\Gamma).
\]
\end{cor}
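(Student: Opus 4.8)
The plan is to deduce the corollary by applying Theorems \ref{radlimthm} and \ref{sepWAlma} to a single discrete orbit $\Gamma(z)$, after first arranging that the three quantities appearing in the statement are genuinely those of $\Gamma(z)$. Concretely, I would fix a point $z \in \dn$ that is not fixed by any non-identity element of $\Gamma$ and set $E = \Gamma(z)$, viewed as a discrete subset of $\dn$ in the sense of Section 2. Since $\Gamma$ is discrete, the orbit is locally finite in $\dn$ and so accumulates only on $\bdry$; hence $L(\Gamma(z)) = \overline{\Gamma(z)} \setminus \Gamma(z) \steq \bdry$ (Definition \ref{limDef}) is exactly the classical Kleinian limit set, $\radlim(\Gamma(z))$ (Definition \ref{radlimDef}) is exactly the classical radial limit set, and $\delta(\Gamma(z)) = \delta(\Gamma)$ by the identification of the accumulation and Poincar\'e exponents noted at the start of this section. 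All three identifications rest on basepoint-independence, which I would justify by observing that replacing $0$ with $z$ changes $d_H(0,g(0))$ by at most $2d_H(0,z)$ for every $g \in \Gamma$, so the two series converge for precisely the same exponents, while the orbit closures differ only by the (boundary-disjoint) orbit points themselves.

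First I would settle the existence of such a basepoint $z$. Loxodromic and parabolic elements fix only points of $\bdry$, so the only non-identity elements with fixed points in the interior are elliptic, and each of these fixes a proper lower-dimensional subset of $\dn$. As $\Gamma$ is countable, the set of interior points fixed by some non-identity element is a countable union of measure-zero sets and therefore has nonempty complement; any $z$ in this complement satisfies the hypothesis of Lemma \ref{klnSEP}.

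With $z$ fixed, the two Kleinian lemmas supply the structural input. Lemma \ref{klnSEP} gives that $\Gamma(z)$ is separated, and Lemma \ref{klnWA}, using that $\Gamma$ is non-elementary, gives that $\Gamma(z)$ is well-approximated. I would then apply Theorem \ref{radlimthm} to the discrete set $\Gamma(z)$ to get $\hd \radlim(\Gamma(z)) \le \delta(\Gamma(z))$, and Theorem \ref{sepWAlma} to the separated and well-approximated set $\Gamma(z)$ to get $\delta(\Gamma(z)) \le \ubd L(\Gamma(z))$. Substituting the basepoint-independent identifications $\radlim(\Gamma(z)) = \radlim(\Gamma)$, $L(\Gamma(z)) = L(\Gamma)$, and $\delta(\Gamma(z)) = \delta(\Gamma)$ chains these two inequalities into $\hd \radlim(\Gamma) \le \delta(\Gamma) \le \ubd L(\Gamma)$, as required.

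The genuinely load-bearing work has already been carried out in Lemmas \ref{klnSEP} and \ref{klnWA} and in Theorems \ref{sepWAlma} and \ref{radlimthm}, so the corollary is essentially an assembly step. The only points I would be careful not to wave away are the existence of a non-fixed basepoint for a general non-elementary group and the basepoint-independence of $L(\Gamma)$, $\radlim(\Gamma)$, and $\delta(\Gamma)$. Both are classical facts about Kleinian groups, but they must be invoked explicitly, since the first is what makes the hypothesis of Lemma \ref{klnSEP} available and the second is what makes the three quantities in the statement unambiguous.
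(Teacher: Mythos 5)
Your proposal is correct and follows essentially the same route as the paper, which simply combines Lemmas \ref{klnSEP} and \ref{klnWA} with Theorems \ref{radlimthm} and \ref{sepWAlma} applied to an orbit of $\Gamma$. Your extra care over the existence of a basepoint not fixed by any non-identity element and over basepoint-independence of $L(\Gamma)$, $\radlim(\Gamma)$ and $\delta(\Gamma)$ is a welcome tidying of details the paper leaves implicit, but it does not change the argument.
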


The next result follows immediately from the result of Bishop--Jones which gives  that $\hd L_{rad}(\Gamma) =  \delta(\Gamma)$ \emph{always} holds for non-elementary $\Gamma$; see \cite{bishopjones}.   However, their result is rather deep and our proof is more straightforward.

\begin{cor}
If a non-elementary Kleinian group $\Gamma$   satisfies $\hd L_{rad}(\Gamma) = \ubd \LG$, then
\[
\hd L(\Gamma) = \bd  L(\Gamma) = \delta(\Gamma).
\]
\end{cor}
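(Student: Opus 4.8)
The plan is to deduce the three equalities from the two-sided bound established in the previous corollary, combined with the elementary monotonicity properties of the dimensions; no new construction is needed. Since $\Gamma$ is non-elementary, the previous corollary gives
\[
\hd \radlim(\Gamma) \leq \delta(\Gamma) \leq \ubd L(\Gamma),
\]
and the standing hypothesis $\hd \radlim(\Gamma) = \ubd L(\Gamma)$ forces every inequality in this chain to collapse, so that in particular $\delta(\Gamma) = \hd \radlim(\Gamma) = \ubd L(\Gamma)$.

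It then remains only to squeeze the Hausdorff and lower box dimensions of the \emph{full} limit set between these two now-coinciding quantities. Because every radial limit point is an accumulation point of the orbit lying on $S^{n-1}$ and hence belongs to $L(\Gamma)$, we have the inclusion $\radlim(\Gamma) \subseteq L(\Gamma)$, so monotonicity of Hausdorff dimension yields $\hd \radlim(\Gamma) \leq \hd L(\Gamma)$. Feeding this into the general chain $\hd L(\Gamma) \leq \lbd L(\Gamma) \leq \ubd L(\Gamma)$, valid for any bounded set, gives
\[
\ubd L(\Gamma) = \hd \radlim(\Gamma) \leq \hd L(\Gamma) \leq \lbd L(\Gamma) \leq \ubd L(\Gamma).
\]
Every term is therefore equal, whence $\hd L(\Gamma) = \lbd L(\Gamma) = \ubd L(\Gamma)$; in particular the box dimension exists, agrees with the Hausdorff dimension, and both equal $\delta(\Gamma)$, which is the claim.

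Essentially all of the substance has already been absorbed into the previous corollary, so there is no genuinely hard step remaining and the argument is a formal squeeze. The only points that deserve a word of care are the inclusion $\radlim(\Gamma) \subseteq L(\Gamma)$ and the facts used implicitly when invoking Lemmas \ref{klnSEP} and \ref{klnWA}: one applies those lemmas to an orbit $\Gamma(z)$ for a base point $z \in \dn$ not fixed by any non-identity element of $\Gamma$ (such $z$ exist since the non-identity elements are countable and each fixes a nowhere dense subset of $\dn$, so by Baire their union cannot exhaust $\dn$), and the quantities $L(\Gamma)$, $\radlim(\Gamma)$ and $\delta(\Gamma)$ are all independent of this choice of base point. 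Granting these standard facts about Kleinian groups, the corollary follows at once.
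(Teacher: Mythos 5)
Your argument is correct and is exactly the squeeze the paper intends: the preceding corollary gives $\hd L_{rad}(\Gamma) \leq \delta(\Gamma) \leq \ubd L(\Gamma)$, and the inclusion $L_{rad}(\Gamma) \subseteq L(\Gamma)$ together with $\hd \leq \lbd \leq \ubd$ collapses the whole chain under the hypothesis. The paper states this corollary without proof as an immediate consequence, so your write-up (including the remarks on the base point and the inclusion of the radial limit set in the full limit set) matches and slightly elaborates the intended argument.
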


Using Theorem \ref{boxdimsharpness} we give a characterisation of the upper box dimension of the    limit set of an arbitrary non-elementary Kleinian group.

\begin{cor}
If $\Gamma$ is a non-elementary Kleinian group, then
\[
\ubd L(\Gamma) = \max_{E} \{\delta(E): L(E) = L(\Gamma)\}
\]
where the maximum is taken over all $E \subseteq \dn$ which are separated and well-approximated.
\end{cor}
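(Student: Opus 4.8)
The plan is to recognise this corollary as an immediate specialisation of Theorem \ref{boxdimsharpness}. That theorem characterises the upper box dimension of an \emph{arbitrary} non-empty closed set $X \subseteq S^{n-1}$ as the maximum of $\delta(E)$ over all separated and well-approximated $E \subseteq \mathbb{D}^n$ satisfying $L(E) = X$. The maximum in the statement of the corollary ranges over exactly this same family of sets and is constrained only by the requirement $L(E) = L(\Gamma)$. Consequently, the corollary will follow the instant we verify that $X := L(\Gamma)$ is an admissible choice; that is, that $L(\Gamma)$ is non-empty, closed, and contained in $S^{n-1}$.

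All three properties are standard facts about non-elementary Kleinian groups, but for a self-contained treatment I would derive them from the results already in place. Closedness is automatic, since $L(\Gamma) = \overline{\Gamma(0)} \setminus \Gamma(0)$ is closed for any discrete set, as noted after Definition \ref{limDef}. Non-emptiness follows from non-elementarity: such a group contains a loxodromic element whose two fixed points lie in $L(\Gamma)$, so in particular $L(\Gamma) \neq \emptyset$. For the containment in the boundary sphere I would invoke Lemma \ref{klnSEP}: choosing a base point $z \in \mathbb{D}^n$ not fixed by any non-trivial element of $\Gamma$ (which exists, since $\Gamma$ is countable and each element fixes only finitely many points, so the fixed-point set is small), the orbit $\Gamma(z)$ is separated. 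We observed in the discussion following Definition \ref{sepWAdef} that a separated set has no accumulation points in the interior, i.e. $L(\Gamma(z)) \cap \mathbb{D}^n = \emptyset$. Since the classical limit set is independent of the base point, $L(\Gamma) = L(\Gamma(z)) \subseteq S^{n-1}$.

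With these verifications in hand, applying Theorem \ref{boxdimsharpness} to $X = L(\Gamma)$ yields the claimed identity directly, and the attainment of the maximum is inherited from that theorem. There is essentially no analytic obstacle: the full content of the corollary is already carried by Theorem \ref{boxdimsharpness}, and the only point genuinely requiring care is the elementary check that $L(\Gamma)$ qualifies as an admissible set $X$—in particular that it is a subset of the sphere rather than exhibiting interior accumulation, which is precisely what separation of a suitably chosen orbit guarantees.
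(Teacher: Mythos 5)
Your proposal matches the paper's (implicit) derivation exactly: the corollary is presented as an immediate application of Theorem \ref{boxdimsharpness} to $X = L(\Gamma)$, and your checks that $L(\Gamma)$ is non-empty, closed and contained in $S^{n-1}$ are the right (and only) things to verify. One pedantic quibble: for $n \geq 3$ a non-identity isometry may fix an entire totally geodesic subspace rather than finitely many points, but these fixed-point sets are still nowhere dense, so a suitable base point exists and your argument goes through unchanged.
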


We note that one can show directly that   $\max_{E} \{\delta(E):L(E) = L(\Gamma)\}$ is the \emph{convex core entropy} $h_c(\Gamma)$  of $\Gamma$ and thus recover   \cite[Theorem 1]{falk}.

\section{Generalisations and further work}

The definitions of separated and well-approximated from Definition  \ref{sepWAdef} were motivated twofold.  First, they are the simplest and weakest  conditions we found which allowed us to directly relate $\delta(E)$ to the dimensions of $L(E)$.  Second, they are satisfied for  orbits of non-elementary Kleinian groups.  However, given that we are considering a very general setup, it is natural to further weaken these conditions and see what can be recovered.  This is the aim of this section.  

\begin{defn}\label{sepWAdef2} 
Let $E \subseteq \dn$ be discrete and let $0<\beta \leq 1 \leq \alpha$.
\begin{enumerate}
\item[(i)] We say that $E$ is \emph{$\alpha$-separated} if there exists a positive constant $c_1>0$ such that for all $x$ in $E$,
\[
B(x, c_1(1-|x|)^\alpha) \cap E = \{x\}.
\]

\item[(ii)]  We say that $E$ is \emph{$\beta$-well-approximated} if there exists a constant $c_2 \geq 1$ such that for all $x$ in $E$, there exists an element $z$ in $L(E)$ such that 
\[
|x-z| \leq c_2(1-|x|)^\beta.
\]
\end{enumerate}    
\end{defn}

If $E$ is 1-separated, then it is simply separated as in Definition  \ref{sepWAdef}  and if $E$ is 1-well-approximated then it is simply well-approximated as in Definition  \ref{sepWAdef}.  The idea is that as $\beta$ and $\alpha$ move away from 1 we are imposing weaker assumptions on $E$---and  should expect poorer dimension estimates.

\begin{thm} \label{sepWAlma2}
If $E \subseteq \dn$ is $\alpha$-separated and $\beta$-well-approximated for some  $0<\beta \leq 1 \leq \alpha$, then
\[
 \ubd L(E) \geq \delta(E) -2 \big( n\alpha-(n-1)\beta -1\big).
\]
\end{thm}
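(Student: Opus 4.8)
The plan is to rerun the proof of Theorem \ref{sepWAlma}, carefully tracking how the two exponents rescale the two geometric inputs used there. Write $Q = n\alpha - (n-1)\beta - 1$ and note $Q \geq 0$ since $\alpha \geq 1 \geq \beta$. As before, we may assume $E$ is infinite, so that $\emptyset \neq L(E) \subseteq S^{n-1}$, and for $r>0$ we set $E_r = \{x \in E : 1-|x| \in [r,2r)\}$, which is finite and non-empty along a sequence $r \searrow 0$. The only structural change is that for $x \in E_r$ the $\alpha$-separation radius is now $\approx r^\alpha$ rather than $\approx r$, while $\beta$-well-approximation supplies a limit point $z(x) \in L(E)$ at distance $\lesssim r^\beta$ rather than $\lesssim r$; since $r<1$ we have $r^\alpha \leq r \leq r^\beta$.

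The single estimate driving everything is the following volume count. Because the points of $E_r$ are $\approx r^\alpha$-separated (so the balls $B(x,c_1 r^\alpha/2)$ are pairwise disjoint) and lie in the annulus of radial thickness $\approx r$, any ball $B(w,Cr^\beta)$ meets $E_r$ in at most
\[
\frac{\mathrm{vol}\big(B(w,Cr^\beta)\cap\{1-|\cdot|\in[r,2r)\}\big)}{\inf_{x}\mathrm{vol}\big(B(x,c_1 r^\alpha/2)\big)} \approx \frac{r\cdot (r^\beta)^{n-1}}{(r^\alpha)^n} = r^{-Q}
\]
points, using that the intersection is essentially an $(n-1)$-dimensional cap of radius $\approx r^\beta$ crossed with a radial interval of length $\approx r$, and that $r^\alpha \leq r \leq r^\beta$. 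This is the replacement for the two `$\lesssim 1$' overlap counts in the proof of Theorem \ref{sepWAlma}, where $Q=0$. I expect this annulus bookkeeping to be the main obstacle: one must check that the cap-times-interval shape is the correct model at scale $r^\beta$ and that the $r^\alpha$-balls fit into the layer so the volume bound is genuine.

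With this in hand I would follow the two-step extraction verbatim. For each $x \in E_r$ choose $z(x) \in L(E)$ with $|x-z(x)| \leq c_2 r^\beta$ and let $\Tilde{Z}$ be the set of chosen points. Each $z \in \Tilde{Z}$ lies in at most $\approx r^{-Q}$ of the balls $\{B(x,c_2 r^\beta)\}_{x \in E_r}$, hence is chosen by at most $\approx r^{-Q}$ points, so $\# \Tilde{Z} \gtrsim \# E_r\, r^{Q}$. Now thin $\Tilde{Z}$ to a maximal $2r$-separated subset $Z$. Any $z' \in \Tilde{Z} \cap B(z^*,2r)$ with $z^* \in Z$ has its associated $x$ in $B(z^*,Cr^\beta)$ (as $r \leq r^\beta$), so $B(z^*,2r)$ meets $\Tilde{Z}$ in at most $\approx r^{-Q}$ points, whence $\# Z \gtrsim \# \Tilde{Z}\, r^{Q} \gtrsim \# E_r\, r^{2Q}$. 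The collection $\{B(z,r)\}_{z \in Z}$ is an $r$-packing of $L(E)$, so $N_r(L(E)) \gtrsim \# E_r\, r^{2Q}$. Feeding in the critical-exponent input exactly as in Theorem \ref{sepWAlma} — for $\epsilon \in (0,\delta)$ the divergence of $S_E(\delta-\epsilon)$ forces $\# E_{2^{-k}} \geq k^{-2}2^{k(\delta-\epsilon)}$ for infinitely many $k$ — gives, along a sequence $r \searrow 0$,
\[
N_r(L(E)) \gtrsim \frac{r^{-(\delta-\epsilon)+2Q}}{(\log(1/r))^2},
\]
and letting $\epsilon \to 0$ yields $\ubd L(E) \geq \delta - 2Q$, as claimed. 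I would remark that the factor $2$ is an artefact of applying the clustering estimate twice (once passing $E_r \to \Tilde{Z}$, once $\Tilde{Z} \to Z$): a single combined count of the points of $E_r$ near each packing centre loses only one factor of $r^{-Q}$ and would sharpen the conclusion to $\ubd L(E) \geq \delta - (n\alpha-(n-1)\beta-1)$.
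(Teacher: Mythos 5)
Your proposal is correct and follows essentially the same route as the paper's own proof: the same annular volume count giving an overlap multiplicity of $\approx r^{-(n\alpha-(n-1)\beta-1)}$, applied once to pass from $E_r$ to $\Tilde{Z}$ and once more to extract the $r$-packing, then the same divergence argument for $S_E(\delta-\epsilon)$. Your closing remark also appears to be sound: counting the points of $E_r$ directly against a maximal $2r$-separated subset of $\Tilde{Z}$ loses only one factor of $r^{-(n\alpha-(n-1)\beta-1)}$ and would sharpen the conclusion to $\ubd L(E) \geq \delta(E) - \big(n\alpha-(n-1)\beta-1\big)$, which is consistent with (indeed attains equality in) the paper's sharpness example.
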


\begin{proof}
The general proof strategy is the same as the proof of Theorem \ref{sepWAlma}, but there are a few places where extra observations are needed. We may assume that $E$ is infinite and then $\emptyset \neq L(E) \subseteq S^{n-1}$. For a given $r>0$, let
\[
E_r = \{x \in E: \emph{ } 1-|x| \in [r,2r)\}.
\]
Since $E$ is $\alpha$-separated, it follows that $\# E_r < \infty$ and that $E_r$ is non-empty for a sequence of $r \searrow 0$.  Define also
\begin{align*}
Z^\beta_r &= \{z \in L(E): \emph{ } \exists x \in E_r \emph{ }s.t. \emph{ }|z-x| \leq c_2(1-|x|)^\beta\}    \\
&= L(E) \cap \bigcup_{x \in E_r } B(x, c_2(1-|x|)^\beta)
\end{align*}
where $c_2 \geq 1$ is as in Definition \ref{sepWAdef2} (ii). Since $E$ is $\alpha$-well-approximated,   $Z^\beta_r$ is non-empty whenever $E_r$ is non-empty. Note that
\[
Z^\beta_r \subseteq \bigcup_{x \in E_r} B(x, c_2(1-|x|)^\beta) \subseteq \bigcup_{x \in E_r} B(x, 2^\beta c_2 r^\beta)
\]
and that $Z^\beta_r \cap B(x,2^\beta c_2 r^\beta)$ is non-empty for all $x \in E_r$.  Now, for every $x \in E_r$, choose  exactly one $z \in Z^\beta_r \cap B(x,2^\beta c_2 r^\beta)$ and denote this set of $z$ by $\Tilde{Z}$. Observe that each $z \in Z^\beta_r$  is contained in 
\[
\lesssim r^{n(\beta-\alpha)-\beta+1}
\]
 distinct balls in the collection $ \{B(x, 2^\beta c_2 r^\beta)\}_{x\in E_r}$. To see this, write
\[
E(r) = E_{r/2} \cup E_{r} \cup E_{2r}
\]
and note that if $z \in B(x,2^\beta c_2r^\beta)$ and $r>0$ is sufficiently small, then 
\[
B(x, c_1 r^\alpha/2) \subseteq E(r) \cap B(x,2^\beta c_2r^\beta) \subseteq  E(r) \cap B(z,2^{1+\beta} c_2r^\beta)
\]
and the balls $\{B(x, c_1 r^\alpha/2)\}_{x\in E_r}$ are pairwise disjoint since $E$ is $\alpha$-separated. Here   $c_1 \in (0,1)$ is as in Definition \ref{sepWAdef} (i). Therefore, writing $vol$ to denote   $n$-dimensional volume,
\begin{align*}
\# \{ x \in E_r : z \in  B(x,2^\beta c_2r^\beta)\}  \leq    \frac{vol\big( E(r) \cap B(z,2^{1+\beta} c_2r^\beta)\big)}{\inf_x vol\big(B(x, c_1 r^\alpha/2)\big)} \approx \frac{r^{(n-1)\beta+1}}{r^{n\alpha}} \approx   r^{(n-1)\beta+1-n\alpha}
\end{align*}
In particular, $ \# \Tilde{Z} \gtrsim r^{n\alpha-(n-1)\beta-1} \cdot \# E_r$.

  We wish to extract an $r$-packing of the limit set from the collection $\{B(z,r)\}_{z \in \Tilde{Z}}$.  Suppose    $z, z' \in \Tilde{Z}$ are distinct with $z \in B(x,2^\beta c_2r^\beta)$ and $z' \in B(x',2^\beta c_2r^\beta)$ for distinct $x,x' \in E_r$, and are such that $|z-z'| \leq 2r$. Then $z' \in B(x',2^\beta c_2r^\beta) \cap B(x, (2^\beta c_2+2)r^\beta)$, from which we deduce $x' \in E(r) \cap B(x, (2^{1+\beta}c_2+2)r^\beta)$ for sufficiently small $r$. However,  since $E$ is $\alpha$-separated, the number of distinct $x' \in E_r$ contained in the ball $B(x, (2^{1+\beta}c_2+2)r^\beta)$ is at most
\[
\frac{vol\big(E(r) \cap B(x, (2^{1+\beta}c_2+2)r^\beta)\big)}{\inf_{x'} vol(B(x',c_1r^\alpha/2))} \approx  r^{(n-1)\beta+1-n\alpha}.
\]
Therefore,  we can extract a subset $Z \subseteq \Tilde{Z}$ with
\[
\# Z \gtrsim r^{n\alpha-(n-1)\beta-1} \cdot \# \Tilde{Z}
\]
and where the collection $P_r =  \{B(z,r)\}_{z \in Z}$ is  an $r$-packing of $L(E)$. Therefore, for every non-empty $E_r$ we have built an $r$-packing $P_r$ of $L(E)$ with 
\begin{equation} \label{sizeofpacking2}
\# P_r \gtrsim r^{2(n\alpha-(n-1)\beta-1)} \cdot \# E_r.
\end{equation}
Write $\delta = \delta(E)$,  assume without loss of generality that $\delta>0$, and let $\epsilon \in (0, \delta)$. By the definition of $\delta$, 
\begin{align*}
\infty  = S_E(\delta-\epsilon)  \leq 1+  \sum_{k \in \mathbb{N}} \sum_{x \in E_{2^{-k}}} (1-|x|)^{\delta-\epsilon} \lesssim \sum_{k \in \mathbb{N}} \# E_{2^{-k}} \cdot 2^{-k(\delta-\epsilon)} .
\end{align*}
This ensures that  infinitely many $k\in \mathbb{N}$ satisfy 
\begin{equation} \label{sizeofer2}
\# E_{2^{-k}} \geq k^{-2} \cdot 2^{k(\delta-\epsilon)}.
\end{equation}
Combining \eqref{sizeofpacking2} and \eqref{sizeofer2}, we get that for a sequence of $r \searrow 0$ 
\[
N_r(L(E)) \gtrsim  r^{2(n\alpha-(n-1)\beta-1)} \frac{r^{-(\delta-\eps)}}{(\log(1/r))^2}
\] 
and, since our choice of $\epsilon>0$ was arbitrary,   we conclude 
\[
\ubd{L(E)} \geq \delta -2(n\alpha-(n-1)\beta-1),
\]
 as required.
\end{proof}

The estimate from Theorem \ref{sepWAlma2}    recovers the estimate from Theorem \ref{sepWAlma} as $\beta,\alpha \to 1$.  Moreover, the estimate is monotonic in the sense that  it improves as $\alpha$ decreases or $\beta$ increases.  If the conditions are weakened too much, then Theorem  \ref{sepWAlma2} becomes trivial, that is, the lower bound drops below zero.   Setting $\beta=1$ we see that in order to get non-trivial information, we need
\[
\alpha<\frac{\delta(E)}{2n}+1.
\]
On the other hand, setting $\alpha=1$ we see that in order to get non-trivial information, we need
\[
\beta > 1-\frac{\delta(E)}{2(n-1)} .
\]
Theorem \ref{sepWAlma2}  is sharp in the following sense.

\begin{ex}
Fix $0<\beta \leq 1 \leq \alpha$.  For each $k \in \mathbb{N}$ let $E_k \subseteq \dn$ consist of  a maximal collection of $2^{-k\alpha}$-separated   points all of distance between $2^{-k}$ and $2^{1-k}$ from the boundary and distance at most $2^{-k\beta}$ from a given point $w \in S^{n-1}$  and let $E = \cup_k E_k$.  Then clearly $L(E) = \{w\}$  and $E$ is $\alpha$-separated and $\beta$-well-approximated.  Moreover,  
\[
S_E(s)  \approx \sum_k (\# E_k)2^{-ks}  \approx \sum_k \frac{2^{-\beta k(n-1)}2^{-k}}{2^{-k\alpha n} } 2^{-ks} <\infty
\]
if and only if $s> n\alpha-(n-1)\beta -1$ and therefore $\delta(E) = n\alpha-(n-1)\beta -1$.  Finally, note the lower bound coming from Theorem \ref{sepWAlma2} is $\ubd L(E) \geq 0$ which is sharp.
\end{ex}

It is also natural to weaken  the definition of the radial limit, for example, to points which can be approached within an appropriate solid of revolution (replacing a cone).

\begin{defn}
Let $E \subseteq \dn$ be discrete and let $\gamma \in (0,1]$. For $c \geq 1$ we define
\[
L_c^\gamma(E) = \{z \in L(E): \forall r>0, \ \exists x \in E \ s.t. \ |x-z| \leq c(1-|x|)^\gamma \leq cr^\gamma\}
\]
and the \emph{$\gamma$-radial limit set} $L_{rad}^\gamma(E)$ of $E$ by
\[
L_{rad}^\gamma(E) = \bigcup_{c \geq 1} L_c^\gamma(E).
\]
\end{defn}

If $\gamma=1$, then $L_{rad}^1(E)$ is simply the usual radial limit set of $E$ and as $\gamma$ decreases the set $L_{rad}^\gamma(E)$ becomes larger and so we should expect a poorer upper bound for the dimension. 

\begin{thm}  \label{radlimthm2}
If $E \subseteq \dn$ is  discrete and $\gamma \in (0,1]$, then 
\[
\hd L_{rad}^\gamma(E) \leq \frac{\delta(E)}{\gamma}.
\]
\end{thm}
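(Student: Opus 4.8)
The plan is to mirror the proof of Theorem \ref{radlimthm} almost verbatim, the only structural change being that the covering balls now have radius comparable to $(1-|x|)^\gamma$ rather than $(1-|x|)$. First I would fix $c \geq 1$ and, for each $r>0$, set $E^r = \{x \in E : 1-|x| \leq r\}$. Directly from the definition of $L_c^\gamma(E)$, for every $z \in L_c^\gamma(E)$ and the given scale $r$ there is some $x \in E$ with $|x-z| \leq c(1-|x|)^\gamma$ and $c(1-|x|)^\gamma \leq cr^\gamma$; the latter forces $1-|x| \leq r$, so $x \in E^r$. Hence $\{B(x, c(1-|x|)^\gamma)\}_{x \in E^r}$ is a cover of $L_c^\gamma(E)$, and since each such ball has diameter at most $2cr^\gamma$, it is in fact a $2cr^\gamma$-cover, with $2cr^\gamma \to 0$ as $r \to 0$.

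Next comes the exponent bookkeeping, which is exactly where the factor $1/\gamma$ is produced. Write $\delta = \delta(E)$, set the target dimension $s = \delta/\gamma$, and fix $\epsilon>0$. Estimating the $\delta$-approximate $(s+\epsilon)$-dimensional Hausdorff measure against the cover above gives
\[
\mathcal{H}^{s+\epsilon}_{2cr^\gamma}(L_c^\gamma(E)) \leq \sum_{x \in E^r} \big(2c(1-|x|)^\gamma\big)^{s+\epsilon} = (2c)^{s+\epsilon}\sum_{x \in E^r} (1-|x|)^{\gamma(s+\epsilon)}.
\]
The crucial observation is that $\gamma(s+\epsilon) = \delta + \gamma\epsilon$, so the exponent on $(1-|x|)$ lands strictly above $\delta$. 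Peeling off a factor $(1-|x|)^{\gamma\epsilon/2} \leq r^{\gamma\epsilon/2}$ and bounding what remains by the full accumulation series, I obtain
\[
\mathcal{H}^{s+\epsilon}_{2cr^\gamma}(L_c^\gamma(E)) \leq (2c)^{s+\epsilon}\, r^{\gamma\epsilon/2}\, S_E(\delta + \gamma\epsilon/2),
\]
where $S_E(\delta + \gamma\epsilon/2) < \infty$ since $\delta + \gamma\epsilon/2 > \delta(E)$. Letting $r \to 0$ gives $\mathcal{H}^{s+\epsilon}(L_c^\gamma(E)) = 0$, and since $\epsilon>0$ was arbitrary we conclude $\hd L_c^\gamma(E) \leq s = \delta/\gamma$.

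Finally, writing $L_{rad}^\gamma(E) = \bigcup_{c \geq 1} L_c^\gamma(E) = \bigcup_{c \in \mathbb{N}} L_c^\gamma(E)$, the countable stability of Hausdorff dimension yields $\hd L_{rad}^\gamma(E) \leq \delta/\gamma$, completing the argument. I do not anticipate any genuine obstacle here: the proof is a direct transcription of Theorem \ref{radlimthm}, and the only step requiring real care is the exponent accounting---choosing $s = \delta/\gamma$ precisely so that $\gamma(s+\epsilon)$ clears $\delta$ and $S_E$ can be invoked. This is exactly the mechanism that converts the scaling $(1-|x|)^\gamma$ of the covering balls into the $1/\gamma$ factor in the final bound.
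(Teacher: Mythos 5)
Your proposal is correct and follows essentially the same route as the paper's own proof: the same cover $\{B(x, c(1-|x|)^\gamma)\}_{x \in E^r}$ of $L_c^\gamma(E)$, the same exponent bookkeeping producing $r^{\gamma\epsilon/2} S_E(\delta + \gamma\epsilon/2)$, and the same countable-stability step over $c \in \mathbb{N}$. The only (harmless) omission is the explicit remark that one may assume $\delta(E) < \infty$, which is needed before setting $s = \delta/\gamma$.
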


\begin{proof}
The general proof strategy is the same as the proof of Theorem \ref{radlimthm}.  For a given $r>0$, let $E^r = \{x \in E: 1-|x| \leq r\}$ and let $c \geq 1$. Note that $\{B(x, c(1-|x|)^\gamma)\}_{x \in E^r}$ is a $2cr^\gamma$-cover of $L_c^\gamma(E)$.  Write  $\delta = \delta(E)$, fix $\epsilon>0$ and let $r>0$.  Without loss of generality we may assume $\delta<\infty$.  Then, by the definition of Hausdorff measure,
\begin{align*}
\mathcal{H}_{2cr^\gamma}^{\delta/\gamma+\epsilon}(L_c^\gamma(E)) & \leq \sum_{x \in E^r} (2c(1-|x|)^\gamma)^{\delta/\gamma+\epsilon} \\
& \leq (2c)^{\delta/\gamma+\epsilon}r^{\epsilon\gamma/2} \sum_{x \in E} (1-|x|)^{\delta+{\epsilon\gamma/2}} \\
& = (2c)^{\delta/\gamma+\epsilon}r^{\epsilon\gamma/2} S_E(\delta+{\epsilon\gamma/2})
\end{align*}
noting that $ S_E(\delta+{\epsilon\gamma/2})<\infty$ by definition.  Letting $r\rightarrow{0}$ we get $\mathcal{H}^{\delta/\gamma+\epsilon}(L_c^\gamma(E)) = 0$. Since $\epsilon>0$ was  arbitrary, we conclude that $\hd L_c^\gamma(E) \leq \delta/\gamma$. Moreover,
\[
L_{rad}^\gamma(E) = \bigcup_{c \geq 1} L_c^\gamma(E) = \bigcup_{\substack{c \geq 1\\
                                                    c \in \mathbb{N}}}
                                                L_c^\gamma(E)
\]
and so,  by the countable stability of Hausdorff dimension, $\hd L_{rad}^\gamma(E)  \leq \delta/\gamma$, as required.  
\end{proof}

Theorem \ref{radlimthm2} is also sharp, as we show below. We use a self-similar construction and refer the reader to \cite{falconer} for more background on self-similar sets.

\begin{ex}
Let $0<t \leq \gamma \leq 1$ and $X \subseteq S^{1}$ be a   self-similar set (in the arclength metric)  generated by 2 contractions with contraction ratios both equal to $2^{-\gamma/t} \in (0,1/2]$ and satisfying the open set condition.  For $k \in \mathbb{N}$ let $E_k \subseteq \mathbb{D}^2$ consist of  points  placed at a distance $2^{-k/t}$ from the boundary close to each of the $2^{k}$ cylinders generating $X$ at the $k$th level.  Let $E = \cup_k E_k$.  Then
\[
S_E(s)  = \sum_k (\# E_k) 2^{-ks/t}  \approx \sum_k    2^{k(1-s/t)}  <\infty
\]
if and only if $s> t$ and therefore $\delta(E) = t$.  Since the cylinders generating $X$ at the $k$th level are of diameter $\approx 2^{-k\gamma/t}$, we get $L_{rad}^\gamma(E) = X$ and so 
\[
\hd L_{rad}^\gamma(E) = \hd X = \frac{t}{\gamma}  = \frac{\delta(E)}{\gamma} 
\]
and the bound from Theorem \ref{radlimthm2} is  sharp.
\end{ex}


\end{document}